\DeclareMathSymbol{\rightrightarrows}  {\mathrel}{AMSa}{"13}
\def\sd{\operatorname{sd}}
\def\sk{\operatorname{sk}}
\def\Re{\operatorname{Re}}
\def\varholim@#1#2{\mathop{\vtop{\ialign{##\crcr
 \hfil$#1\m@th\operator@font holim$\hfil\crcr
 \noalign{\nointerlineskip\kern\ex@}#2#1\crcr
 \noalign{\nointerlineskip\kern-\ex@}\crcr}}}}
\def\hocolim{\mathpalette\varholim@\rightarrowfill@} 
\def\hoinvlim{\mathpalette\varholim@\leftarrowfill@}
\newtheorem{theorem}{Theorem}
\newtheorem{lemma}[theorem]{Lemma}
\newtheorem{proposition}[theorem]{Proposition}
\theoremstyle{definition}
\newtheorem{example}[theorem]{Example}
\newtheorem{remark}[theorem]{Remark}
\begin{document}

\title{\bf Metric spaces and homotopy types}
\author{J.F. Jardine\thanks{Supported by NSERC.}}

\affil{\small Department of Mathematics\\University of Western Ontario\\
  London, Ontario, Canada
}
\affil{jardine@uwo.ca}

\maketitle

\begin{abstract}
By analogy with methods of Spivak, there is a realization functor which takes a persistence diagram $Y$ in simplicial sets to an extended pseudo-metric space (or ep-metric space) $\Re(Y)$. The functor $\Re$ has a right adjoint, called the singular functor, which takes an ep-metric space $Z$ to a persistence diagram $S(Z)$. We give an explicit description of $\Re(Y)$, and show that it depends only on the $1$-skeleton $\sk_{1}Y$ of $Y$. If $X$ is a totally ordered ep-metric space, then there is an isomorphism $\Re(V_{\ast}(X)) \cong X$, between the realization of the Vietoris-Rips diagram $V_{\ast}(X)$ and the ep-metric space $X$. The persistence diagrams $V_{\ast}(X)$ and $S(X)$ are sectionwise equivalent for all such $X$.
  \end{abstract}

\section*{Introduction}

An extended pseudo-metric space, here called an ep-metric space, is a set $X$ together with a function $d:X \times X \to [0,\infty]$ such that the following conditions hold:
  \begin{itemize}
  \item[1)] $d(x,x)=0$,
  \item[2)] $d(x,y) = d(y,x)$,
  \item[3)] $d(x,z) \leq d(x,y) + d(y,z)$.
  \end{itemize}
  There is no condition that $d(x,y)=0$ implies $x$ and $y$ coincide --- this is where the adjective ``pseudo'' comes from, and the gadget is ``extended'' because we are allowing an infinite distance.

  A metric space is an ep-metric space for which $d(x,y)=0$ implies $x=y$, and all distances $d(x,y)$ are finite.

  The traditional objects of study in topological data analysis are finite metric spaces $X$, and the most common analysis starts by creating a family of simplicial complexes $V_{s}(X)$, the Vietoris-Rips complexes for $X$, which are parameterized by a distance variable $s$.

  To construct the complex $V_{s}(X)$, it is harmless at the outset is to list the elements of $X$, or give $X$ a total ordering --- one can always do this without damaging the homotopy type. Then $V_{s}(X)$ is a simplicial complex (and a simplicial set), with simplices given by strings
  \begin{equation*}
    x_{0} \leq x_{1} \leq \dots \leq x_{n}
    \end{equation*}
of elements of $X$ such that $d(x_{i},x_{j}) \leq s$ for all $i,j$. If $s \leq t$ then there is an inclusion $V_{s}X \subset V_{t}(X)$, and varying the distance parameter $s$ gives a diagram (functor) $V_{\ast}(X): [0,\infty] \to s\mathbf{Set}$, taking values in simplicial sets.

Following Spivak \cite{fuzzy-Spivak} (sort of), one can take an arbitrary diagram $Y: [0,\infty] \to s\mathbf{Set}$, and produce an ep-metric space $\Re(Y)$, called its realization. This realization functor has a right adjoint $S$, called the singular functor, which takes an ep-metric space $Z$ and produces a diagram $S(Z): [0,\infty] \to s\mathbf{Set}$ in simplicial sets.

One needs good cocompleteness properties to construct the realization functor $\Re$. Ordinary metric spaces are not well behaved in this regard, but it is shown in the first section (Lemma \ref{lem 3}) that the category of ep-metric spaces has all of the colimits one could want. Then $\Re(Y)$ can be constructed as a colimit of finite metric spaces $U^{n}_{s}$, one for each simplex $\Delta^{n} \to Y_{s}$ of some section of $Y$.

The metric space $U^{n}_{s}$ is the set $\{0,1, \dots ,n\}$, equipped with a metric $d$, where $d(i,j) = s$ for $i \ne j$. A morphism $U^{n}_{s} \to Z$ of ep-metric spaces is a list $(x_{0},x_{1}, \dots ,x_{n})$ of elements of $Z$ such that $d(x_{i},x_{j}) \leq s$ for all $i,j$. Such lists have nothing to with orderings on $Z$, and could have repeats.

With a bit of categorical homotopy theory, one shows (Proposition \ref{prop 7}) that $\Re(Y)$ is the set of vertices of the simplicial set $Y_{\infty}$ (evaluation of $Y$ at $\infty$), equipped with a metric that is imposed by the proof of Lemma \ref{lem 3}.

One wants to know about the homotopy properties of the counit map $\eta: Y \to S(\Re(Y))$, especially when $Y$ is an old friend such as the Vietoris-Rips system $V_{\ast}(X)$. But $\Re(V_{\ast}(X))$ is the original metric space $X$ (Example \ref{ex 13}), the object $S(X)$ is the diagram $[0,\infty] \to s\mathbf{Set}$ with $(S(X)_{t})_{n} = \hom(U^{n}_{t},X)$, and the counit $\eta: V_{t}(X) \to S_{t}(X)$ in simplicial sets takes an $n$-simplex $\sigma: \Delta^{n} \to V_{t}(X)$ to the list $(\sigma(0),\sigma(1), \dots ,\sigma(n))$ of its vertices.

We show in Section 3 (Theorem \ref{th 16}, the main result of this paper) that the map $\eta: V_{t}(X) \to S_{t}(X)$ is a weak equivalence for all distance parameter values $t$. The proof proceeds in two main steps, and involves technical results from the theory of simplicial approximation. The steps are the following:
\smallskip

\noindent
1)\ We show (Lemma \ref{lem 14}) that the map $\eta$ induces a weak equivalence $\eta_{\ast}: BNV_{t}(X) \to BNS_{t}(X)$, where $\eta_{\ast}: NV_{t}(X) \to NS_{t}(X)$ is the induced comparison of posets of non-degenerate simplices. Here, $V_{t}(X)$ is a simplicial complex, so that $BNV_{t}(X)$ is a copy of the subdivision $\sd(V_{t}(X))$, and is therefore weakly equivalent to $V_{t}(X)$.
\medskip

\noindent
2)\ There is a canonical map $\pi: \sd S_{t}(X) \to BNS_{t}(X)$, and the second step in the proof of Theorem \ref{th 16} is to show (Lemma \ref{lem 15}) that this map $\pi$ is a weak equivalence.
\smallskip

\noindent
It follows that the map $\eta$ induces a weak equivalence $\sd(V_{t}(X)) \to \sd(S_{t}(X))$, and Theorem \ref{th 16} is a consequence.
\medskip

The fact that the space $S_{t}(X)$ is weakly equivalent to $V_{t}(X)$ for each $t$ means that we have yet another system of spaces $S_{\ast}(X)$ that models persistent homotopy invariants for a data set $X$.

One should bear in mind, however, that $S_{t}(X)$ is an infinite complex. To see this, observe that if $x_{0}$ and $x_{1}$ are distinct points in $X$ with $d(x_{0},x_{1}) \leq t$, then all of the lists
  \begin{equation*}
    (x_{0},x_{1},x_{0},x_{1}, \dots ,x_{0},x_{1})
  \end{equation*}
  define non-degenerate simplices of $S_{t}(X)$.

\tableofcontents

\section{ep-metric spaces}

  An {\it extended pseudo-metric space} \cite{HMc-2018} (or an {\it uber metric space} \cite{fuzzy-Spivak}) is a set $Y$, together with a function
  $d:Y \times Y \to [0,\infty]$, such that the following conditions hold:
  \begin{itemize}
  \item[a)] $d(x,x)=0$,
  \item[b)] $d(x,y) = d(y,x)$,
  \item[c)] $d(x,z) \leq d(x,y) + d(y,z)$.
  \end{itemize}
Following \cite{Scocc-thesis}, I use the term {\it ep-metric spaces} for these objects, which will be denoted by $(Y,d)$ in cases where clarity is required for the metric.
\medskip

  Every metric space $(X,d)$ is an ep-metric space, by composing the distance function $d: X \times X \to [0,\infty)$ with the inclusion $[0,\infty) \subset [0,\infty]$.

\medskip      

  A morphism between ep-metric spaces $(X,d_{X})$ and $(Y,d_{Y})$ is a function $f: X \to Y$ such that
  \begin{equation*}
    d_{Y}(f(x),f(y)) \leq d_{X}(x,y).
  \end{equation*}
  These morphisms are sometimes said to be non-expanding \cite{HMc-2018}.

  I shall use the notation $ep-\mathbf{Met}$ to denote the category of ep-metric spaces and their morphisms.
  \medskip

 \begin{example}[Quotient ep-metric spaces]\label{ex 1}
  Suppose that $(X,d)$ is an ep-metric space and that $p: X \to Y$ is a surjective function.

For $x,y \in Y$, set
\begin{equation}\label{eq 1}
  D(x,y) = \inf_{P}\ \sum_{i}\ d(x_{i},y_{i}),
\end{equation}
where each $P$ consists of pairs of points $(x_{i},y_{i})$ with $x=x_{0}$ and $y_{k}=y$, such that $p(y_{i})=p(x_{i+1})$.

Certainly $D(x,x) = 0$ and $D(x,y) = D(y,x)$. One thinks of each $P$ in the definition of $D(x,y)$ as a ``polygonal path'' from $x$ to $y$. Polygonal paths concatenate, so that $D(x,z) \leq D(x,y) + D(y,z)$, and $D$ gives the set $Y$ an ep-metric space structure. This is the {\it quotient} ep-metric space structure on $Y$.

If $x,y$ are elements of $X$, the pair $(x,y)$ is a polygonal path from $x$ to $y$, so that $D(p(x),p(y)) \leq d(x,y)$. It follows that the function $p$ defines a morphism $p: (X,d) \to (Y,D)$ of ep-metric spaces.
\end{example}

 \begin{example}[Dividing by zero]\label{ex 2}
   Suppose that $(X,d)$ is an ep-metric space. There is an equivalence relation on $X$, with $x \sim y$ if and only if $d(x,y) =0$. Write $p: X \to X/\sim\ =: Y$ for the corresponding quotient map.

   Given a polygonal path $P = \{ (x_{i},y_{i}) \}$ from $x$ to $y$ in $X$ as above, $d(y_{i},x_{i+1}) = 0$, so the sum corresponding to $P$ in (\ref{eq 1}) can be rewritten as
   \begin{equation*}
     d(x,y_{0}) + d(y_{0},x_{1}) + d(x_{1},y_{1}) + \dots + d(x_{k},y).
   \end{equation*}
It follows that $d(x,y) \leq D(p(x),p(y))$, whereas $D(p(x),p(y)) \leq d(x,y)$ by construction.

Thus, if $D(p(x),p(y)) = 0$, then $d(x,y)=0$ so that $p(x)=p(y)$.
 \end{example}

  \begin{lemma}\label{lem 3}
The category $ep-\mathbf{Met}$ of ep-metric spaces is cocomplete.
  \end{lemma}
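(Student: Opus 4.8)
The plan is to invoke the standard criterion that a category is cocomplete as soon as it possesses all small coproducts and all coequalizers, constructing each explicitly and using the quotient ep-metric structure of Example \ref{ex 1} for the coequalizers.

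First I would build coproducts. Given a family $\{(X_{i},d_{i})\}_{i \in I}$, take the underlying set to be the disjoint union $X = \bigsqcup_{i} X_{i}$, with distance $d(x,y) = d_{i}(x,y)$ when $x,y$ lie in a common summand $X_{i}$ and $d(x,y) = \infty$ otherwise. Axioms (a)--(c) are immediate: within a summand they reduce to those of $d_{i}$, and the triangle inequality is automatic whenever one of the three distances is $\infty$. The inclusions $X_{i} \to X$ are non-expanding, and a family of morphisms $f_{i}: (X_{i},d_{i}) \to (Z,d_{Z})$ assembles to a function $f: X \to Z$ whose non-expanding condition holds within each summand (since each $f_{i}$ is non-expanding) and is vacuous across distinct summands (where $d = \infty$). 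Thus $X$ is the coproduct; the empty case gives the empty ep-metric space, which is the initial object.

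Next I would construct coequalizers, which carries the real content. Given a parallel pair $f,g: (X,d_{X}) \to (Y,d_{Y})$, let $\sim$ be the equivalence relation on the set $Y$ generated by $f(x) \sim g(x)$, with quotient surjection $p: Y \to Y/\!\sim$. Example \ref{ex 1} equips $Y/\!\sim$ with the quotient ep-metric $D$ and shows that $p$ is a morphism; by construction $pf = pg$. It remains to check the universal property. Given $h: (Y,d_{Y}) \to (Z,d_{Z})$ with $hf = hg$, there is a unique set map $\bar{h}: Y/\!\sim\ \to Z$ with $\bar{h}p = h$, and I must verify that $\bar{h}$ is non-expanding. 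For classes $a,b \in Y/\!\sim$, choose any polygonal path $P = \{(x_{i},y_{i})\}$ in $Y$ from $a$ to $b$, so that $p(x_{0}) = a$, $p(y_{k}) = b$, and $p(y_{i}) = p(x_{i+1})$. The last identity gives $h(y_{i}) = \bar{h}p(y_{i}) = \bar{h}p(x_{i+1}) = h(x_{i+1})$, so chaining the triangle inequality in $Z$, discarding the zero terms $d_{Z}(h(y_{i}),h(x_{i+1})) = 0$, and using that $h$ is non-expanding, we obtain
\[
  d_{Z}(\bar{h}(a),\bar{h}(b)) = d_{Z}(h(x_{0}),h(y_{k})) \leq \sum_{i} d_{Z}(h(x_{i}),h(y_{i})) \leq \sum_{i} d_{Y}(x_{i},y_{i}).
\]
Taking the infimum over all such paths $P$ yields $d_{Z}(\bar{h}(a),\bar{h}(b)) \leq D(a,b)$, so $\bar{h}$ is a morphism of ep-metric spaces and the coequalizer exists.

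Finally, with all small coproducts and coequalizers available, an arbitrary colimit of a diagram $F: J \to ep\text{-}\mathbf{Met}$ is realized as the coequalizer of the two evident maps
\[
  \coprod_{u\colon j \to j'} F(j) \rightrightarrows \coprod_{j} F(j),
\]
one induced by the structure maps $F(u)$ and the other by the identities, so $\colim_{J} F$ exists and $ep\text{-}\mathbf{Met}$ is cocomplete. I expect the main obstacle to be exactly the non-expanding verification for $\bar{h}$: the infimum defining $D$ can be strictly smaller than any single summand distance, so the argument genuinely relies on the collapses $h(y_{i}) = h(x_{i+1})$ forced by $hf = hg$ to bound $d_{Z}(\bar{h}(a),\bar{h}(b))$ path by path before passing to the infimum.
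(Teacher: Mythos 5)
Your proposal is correct and follows essentially the same route as the paper: coproducts via disjoint union with infinite cross-summand distances, coequalizers via the set-theoretic quotient equipped with the quotient ep-metric of Example \ref{ex 1}, and the non-expanding check for the induced map by bounding along each polygonal path (using the collapses forced by $hf = hg$) before taking the infimum. The only difference is cosmetic: you spell out the standard reduction of general colimits to coproducts plus coequalizers, which the paper leaves implicit.
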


  \begin{proof}
    The empty set is the initial object for this category,

    Suppose that $(X_{i},d_{i}), i \in I$, is a list of ep-metric spaces. Form the set theoretic disjoint union $X = \sqcup_{i}\ X_{i}$, and define a function
    \begin{equation*}
      d: X \times X \to [0,\infty]
    \end{equation*}
    by setting $d(x,y) = d_{i}(x,y)$ if $x,y$ belong to the same summand $X_{i}$ and $d(x,y)= \infty$ otherwise. Any collection of morphisms $f_{i}: X_{i} \to Y$ in $ep-\mathbf{Met}$ defines a unique function $f=(f_{i}): X \to A$, and this function is a morphism of $ep-\mathbf{Met}$ since
    \begin{equation*}
      d(f(x),f(y)) = d(f_{i}(x),f_{j}(y)) \leq \infty = d(x,y)
    \end{equation*}
    if $x \in X_{i}$ and $y \in X_{j}$ with $i \ne j$.

    Suppose given a pair of morphisms
    \begin{equation*}
      \xymatrix{
        A \ar@<1ex>[r]^{f} \ar@<-1ex>[r]_{g} & X
      }
    \end{equation*}
    in $ep-\mathbf{Met}$, and form the set theoretic coequalizer $\pi: X \to C$. The function $p$ is the canonical map onto a set of equivalence classes of $X$, which classes are defined by the relations $f(a) \sim g(a)$ for $a \in A$. We give $C$ the quotient ep-metric space structure, as in Example \ref{ex 1}.

Suppose that $\alpha: (X,d_{X}) \to (Z,d_{Z})$ is an morphism of ep-metric spaces such that $\alpha \cdot f = \alpha \cdot g$. Write $\alpha_{\ast}: C \to Z$ for the unique function such that $\alpha_{\ast} \cdot p = \alpha$.
    
    Suppose given a polygonal path $P = \{ (x_{i},y_{i}) \}$ from $x$ to $y$ in $X$. Then $\alpha(y_{i}) = \alpha(x_{i+1})$, so that
    \begin{equation*}
      d_{Y}(\alpha(x),\alpha(y)) \leq \sum_{i}\ d_{Y}(\alpha(x_{i}),\alpha(y_{i}))
      \leq \sum_{i}\ d_{X}(x_{i},y_{i}).
    \end{equation*}
    This is true for every polygonal path from $x$ to $y$ in $X$, so that
    \begin{equation*}
      d_{Y}(\alpha_{\ast}p(x),\alpha_{\ast}p(y)) \leq d_{C}(p(x),p(y)).
      \end{equation*}
It follows that $\alpha_{\ast}: (C,d_{C}) \to (Z,d_{Z})$ is a morphism of ep-metric spaces.    
  \end{proof}

\begin{example}[``Bad'' filtered colimit]\label{ex 4}
  If one starts with a diagram of metric spaces, the colimit $C$ that is produced by Lemma \ref{lem 3} is an ep-metric space,  and it may be that $d(x,y) = 0$ 
in the coequalizer $C$ for some elements $x,y$ with $x \ne y$.

    In particular, suppose that $X_{s} = \{ (\frac{1}{s\sqrt 2},0),(0,\frac{1}{s\sqrt 2})\} \subset \mathbb{R}^{2}$ for $0 < s < \infty$.
    Write $p_{s} = (\frac{1}{s\sqrt 2},0)$ and $q_{s} = (0,\frac{1}{s\sqrt 2})$ in $X_{s}$. Then $d(p_{s},q_{s}) = \frac{1}{s}$.
    For $s \leq t$ there is an ep-metric space map $X_{s} \to X_{t}$ which is defined by $p_{s} \mapsto p_{t}$ and $q_{s} \mapsto q_{t}$. 

    The filtered colimit $\varinjlim_{s}\ X_{s}$ has two distinct points, namely $p_{\infty}$ and $q_{\infty}$, and $d(p_{\infty},q_{\infty}) \leq d(p_{s},q_{s}) = \frac{1}{s}$ for all $s >0$. It follows that $d(p_{\infty},q_{\infty}) = 0$, whereas $p_{\infty} \ne q_{\infty}$.
\end{example}

\begin{lemma}\label{lem 5}
  Suppose that $X$ is an ep-metric space. Then there is an isomorphism of ep-metric spaces
  \begin{equation*}
  \psi:  \varinjlim_{F}\ F \xrightarrow{\cong} X,
    \end{equation*}
where $F$ varies over the finite subsets of $X$, with their induced ep-metric space structures.
\end{lemma}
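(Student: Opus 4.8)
The plan is to index the colimit by the poset of finite subsets $F \subseteq X$ ordered by inclusion, each equipped with the induced subspace structure $d_F = d_X|_{F \times F}$. This poset is filtered, since any two finite subsets are contained in their union, and for $F \subseteq F'$ the inclusion $F \hookrightarrow F'$ is distance-preserving, hence a morphism of $ep-\mathbf{Met}$. We thus have a genuine diagram, and Lemma \ref{lem 3} produces its colimit $C = \varinjlim_{F} F$ with canonical legs $\lambda_{F}: F \to C$. The inclusions $F \hookrightarrow X$ assemble into a cocone under this diagram, and therefore induce the comparison morphism $\psi: C \to X$, which is the map in question.

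First I would identify the underlying set of $C$. The construction in Lemma \ref{lem 3} builds colimits at the level of underlying sets as set-theoretic coproducts followed by set-theoretic coequalizers, so the forgetful functor to sets carries $C$ to the filtered colimit of the sets $F$, which is exactly the union $\bigcup_{F} F = X$. Hence $\psi$ is a bijection, and under this identification it is the identity on points. It then remains only to prove that $\psi$ is an isometry, i.e. that $d_{C}(x,y) = d_{X}(x,y)$ for all $x,y \in X$.

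Both inequalities come directly from the non-expanding property of morphisms. On one hand $\psi$ is a morphism, so $d_{X}(x,y) = d_{X}(\psi(x),\psi(y)) \leq d_{C}(x,y)$. On the other hand, for fixed $x,y$ I would take the two-point finite subset $F = \{x,y\}$, whose induced metric satisfies $d_{F}(x,y) = d_{X}(x,y)$; the leg $\lambda_{F}: F \to C$ is a morphism, hence non-expanding, which gives $d_{C}(x,y) = d_{C}(\lambda_{F}(x),\lambda_{F}(y)) \leq d_{F}(x,y) = d_{X}(x,y)$. Combining the two estimates yields $d_{C}(x,y) = d_{X}(x,y)$, so $\psi$ is an isomorphism of ep-metric spaces.

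The only place the argument could go wrong is the possibility, illustrated in Example \ref{ex 4}, that a filtered colimit collapses distances, so that $d_{C}$ drops strictly below $d_{X}$. What rules this out is precisely the observation above: the pair $\{x,y\}$ already occurs in the diagram as a finite subset carrying exactly the distance $d_{X}(x,y)$, while the cocone map $\psi$ supplies the matching lower bound, so no infimum over polygonal paths in the quotient construction of Example \ref{ex 1} can push $d_{C}(x,y)$ below $d_{X}(x,y)$. This is in contrast with Example \ref{ex 4}, where no single object of the diagram records the relevant pair at a fixed positive distance, which is what allowed the collapse there. Once this point is appreciated, the remaining verifications — that subspace inclusions and colimit legs are non-expanding — hold by construction, and I expect no further difficulty.
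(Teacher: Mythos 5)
Your proposal is correct and follows essentially the same route as the paper: both establish that $\psi$ is a bijection because the set-theoretic filtered colimit of finite subsets is $X$ itself, then obtain $d_X \leq d_C$ from $\psi$ being non-expanding and $d_C \leq d_X$ from the presence in the diagram of a finite subset containing $\{x,y\}$ with the induced distance. The only cosmetic difference is that where you invoke the non-expanding colimit leg $\lambda_F$, the paper exhibits the one-step polygonal path $(x,y)$ in $F$ for the quotient metric of Lemma \ref{lem 3}, which amounts to the same estimate.
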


\begin{proof}
  The collection of finite subsets of $X$ is filtered, and the set $X$ is a filtered colimit of its finite subsets, so the function defining the ep-metric space map $\psi$ is a bijection. Write $d_{\infty}$ for the metric on the filtered colimit.

  If $x,y \in X$ and $d(x,y) = s \leq \infty$ in $X$, then there is a finite subset $F$ with $x,y \in F$ such that $d(x,y) = s$ in $F$. The list $(x,y)$ is a polygonal path from $x$ to $y$ in $F$, so that $d_{\infty}(x,y) \leq d(x,y)$. It follows that $d(x,y) = d_{\infty}(x,y)$, and so $\psi$ is an isomorphism.  
  \end{proof}

An ep-metric space $(X,d)$ has an associated system of posets $P_{\ast}(X): [0,\infty] \to s\mathbf{Set}$, where $P_{s}(X)$ is the collection of finite subsets $F$ of $X$ such that $d(x,y) \leq s$ for any two members $x,y$ of $X$.

This construction defines a system of abstract simplicial complexes $V_{\ast}(X)$, which can be constructed entirely within simplicial sets when $X$ has a total ordering. In that case, the $n$-simplices of the simplicial set $V_{s}(X)$ are the strings $x_{0} \leq x_{1} \leq \dots \leq x_{n}$ such that $d(x_{i},x_{j}) \leq s$. The diagram $V_{\ast}(X): [0,\infty] \to s\mathbf{Set}$ is the Vietoris-Rips system. The spaces $V_{s}(X)$ are independent up to weak equivalence of the ordering on $X$, because there is a canonical weak equivalence (a ``last vertex map'') $\gamma: BP_{s}(X) \to V_{s}(X)$ of systems, while the spaces $BP_{s}(X)$ are defined independently from the ordering. In classical terms, the nerve $BP_{s}(X)$ of the poset $P_{s}(X)$ (non-degenerate simplices of the Vietoris-Rips complex $V_{s}(X)$) is the barycentric subdivision of $V_{s}(X)$.

\begin{example}[Excision for path components]\label{ex 6}
Suppose that $X$ and $Y$ are finite subsets of an ep-metric space $Z$, with the induced ep-metric space structures. Consider the inclusions of finite ep-metric spaces
      \begin{equation*}
        \xymatrix{
          X \cap Y \ar[r] \ar[d] & Y \ar[d] \\
          X \ar[r] & X \cup Y
        }
      \end{equation*}
      inside $Z$. Write $X \cup_{m} Y$ for the corresponding pushout in the category of ep-metric spaces. The unique map
\begin{equation*}
  X \cup_{m} Y \to X \cup Y
  \end{equation*}
of ep-metric spaces is the identity on the underlying point set. Write $d_{m}$ for the metric on $X \cup_{m} Y$. Then $d_{m}(x,y)$ is the minimum of sums
\begin{equation}\label{eq 2}
  \sum\ d(x_{i},x_{i+1}),
  \end{equation}
  indexed over paths
  \begin{equation*}
    P:\ x = x_{0},x_{1}, \dots .x_{n}=y,
  \end{equation*}
  such that for each $i$ the points $x_{i},x_{i+1}$ are either both in $X$ or both in $Y$.

  All sums in (\ref{eq 2}) are finite, and $d_{m}(x,y)$ is realized by a particular path $P$ since $X$ and $Y$ are finite. Note that $d(x,y) \leq d_{m}(x,y)$, by construction, and that $d(x,y) = d_{m}(x,y)$ if $x,y$ are both in either $X$ or $Y$.

  There are induced simplicial set maps
  \begin{equation*}
    V_{s}(X) \cup V_{s}(Y) \to V_{s}(X \cup_{m} Y) \to V_{s}(X \cup Y),
  \end{equation*}
  all of which are the identity on vertices. There is a $1$-simplex $\sigma = \{x,y\}$ of $V_{s}(X \cup_{m} Y)$ if and only if there is a path
    \begin{equation*}
     P:\ x=x_{0},x_{1}, \dots ,x_{n}=y
    \end{equation*}
    consisting of $1$-simplices in either $X$ or $Y$, such that
    \begin{equation*}
      \sum d(x_{i},x_{i+1}) \leq s.
      \end{equation*}
    Then all $d(x_{i},x_{i+1}) \leq s$, so that $x$ and $y$ are in the same path component of $V_{s}(X) \cup V_{s}(Y)$. It follows that there is an induced isomorphism
    \begin{equation}\label{eq 3}
      \pi_{0}(V_{s}(X) \cup V_{s}(Y)) \cong \pi_{0}V_{s}(X \cup_{m} Y).
    \end{equation}

    The isomorphisms (\ref{eq 3}) induce isomorphisms
\begin{equation}\label{eq 4}
      \pi_{0}(V_{s}(X) \cup V_{s}(Y)) \cong \pi_{0}V_{s}(X \cup_{m} Y).
    \end{equation}
 for arbitrary subsets $X$ and $Y$ of an ep-metric space $Z$, by an application of Lemma \ref{lem 5}. 
\end{example}
    
    \section{Metric space realizations}

    Write $U^{n}_{s}$ for the collection of axis points $x_{i}=\frac{s}{\sqrt 2}e_{i}$, where
\begin{equation*}
  e_{i} = (0, \dots ,\overset{i+1}{1}, \dots ,0) \in  \mathbb{R}^{n+1}.
\end{equation*}
for $0 \leq i \leq n$.
Observe that $d(x_{i},x_{j}) = s$ in $\mathbb{R}^{n+1}$ for $i \ne j$. Another way of looking at it: $U^{n}_{s}$ is the set $\mathbf{n} = \{0,1,\dots ,n\}$ with $d(i,j) = s$ for $i \ne j$.

An ep-metric space morphism $f: U^{n}_{s} \to Y$ consists of points $f(x_{i})$, $0 \leq i \leq n$, such that $d_{Y}(f(x_{i}),f(x_{j})) \leq s$ for all $i,j$.
  \medskip

  Write $s\mathbf{Set}^{[0,\infty]}$ for the category of diagrams (functors) $X: [0,\infty] \to s\mathbf{Set}$ and their natural transformation, which take values in simplicial sets and are
  defined on the poset $[0,\infty]$. I usually write $s \mapsto X_{s}$ for such a diagram $X$. In particular, $X_{\infty}$ is the value that the diagram $X$ takes at the terminal object of $[0,\infty]$.

 Suppose that $K$ is a simplicial set. The representable diagram $L_{s}K$ satisfies the universal property
  \begin{equation*}
    \hom(L_{s}K,X) \cong \hom(K,X_{s}).
  \end{equation*}
  One  shows that
  \begin{equation*}
    (L_{s}K)_{t} =
    \begin{cases}
      \emptyset & \text{if $t<s$,} \\
      K & \text{if $t \geq s$.}
    \end{cases}
  \end{equation*}
  The set of maps $L_{s}\Delta^{n} \to X$ can be identified with the set of $n$-simplices of the simplicial set $X_{s}$.

A morphism $L_{t}\Delta^{m} \to L_{s}\Delta^{n}$ consists of a relation $s \leq t$ and a simplicial map $\theta: \Delta^{m} \to \Delta^{n}$.
In the presence of such a morphism,
the function $\theta: \mathbf{m} \to \mathbf{n}$ defines an
ep-metric space morphism $U^{m}_{t} \to U^{n}_{s}$, since $s=d(\theta(i),\theta(j)) \leq d(i,j)=t$.

\subsection{The realization functor}

Suppose that $X: [0,\infty] \to s\mathbf{Set}$ is a diagram. The category $\mathbf{\Delta}/X$ of simplices of $X$ has maps $L_{s}\Delta^{n} \to X$ as objects and commutative diagrams
\begin{equation*}
  \xymatrix@C=10pt{
    L_{t}\Delta^{m} \ar[rr]^{\theta} \ar[dr]_{\tau} && L_{s}\Delta^{n} \ar[dl]^{\sigma} \\
    & X
  }
\end{equation*}
as morphisms.

Equivalently, a simplex of $X$ is a simplicial set map $\Delta^{n} \to X_{s}$, and a morphism of simplices is a diagram
\begin{equation}\label{eq 5}
  \xymatrix{
    \Delta^{m} \ar[r]^{\theta} \ar[d]_{\tau} & \Delta^{n} \ar[d]^{\sigma} \\
    X_{t} & X_{s} \ar[l]
  }
\end{equation}

Every simplex $\Delta^{n} \to X_{s}$  determines a simplex
\begin{equation*} 
  \Delta^{n} \to X_{s} \to X_{\infty},
\end{equation*}
and we have a functor $r: \mathbf{\Delta}/X \to \mathbf{\Delta}/X_{\infty}$, where $\mathbf{\Delta}/X_{\infty}$ is the simplex category of the simplicial set $X_{\infty}$.

There is an inclusion $i: \mathbf{\Delta}/X_{\infty} \to \mathbf{\Delta}/X$, and the composite $r \cdot i$ is the identity. The maps
\begin{equation}\label{eq 6}
  \xymatrix{
    \Delta^{n} \ar[r]^{1} \ar[d]_{r(\sigma)} & \Delta^{n} \ar[d]^{\sigma} \\
    X_{\infty} & X_{s} \ar[l]
  }
  \end{equation}
define a natural transformation $h: i\cdot r \to 1$.

There is a functor
\begin{equation}\label{eq 7}
  \mathbf{\Delta}/X \to s\mathbf{Set}
\end{equation}
  which takes a morphism (\ref{eq 5}) to the map $\theta: \Delta^{m} \to \Delta^{n}$.

The translation category $E_{X}$ for the functor (\ref{eq 7}) is a simplicial category that has objects consisting of pairs $(\sigma,x)$ where $\sigma: \Delta^{n} \to X_{s}$ and $x \in \Delta^{n}$ (of a fixed dimension). A morphism $(\tau,y) \to (\sigma,x)$ of $E_{X}$ is a morphism $\theta: \tau \to \sigma$ as in (\ref{eq 5}) such that $\theta(y) = x$. The path component simplicial set $\pi_{0}E_{X}$ of the category $E_{X}$ is isomorphic to the colimit
\begin{equation*}
  \varinjlim_{L_{s}\Delta^{n} \to X}\ \Delta^{n}.
\end{equation*}

There is a correponding translation category $E_{X_{\infty}}$ for the functor which takes the simplex $\Delta^{n} \to X_{\infty}$ to the simplicial set $\Delta^{n}$, and there is an induced functor $i_{\ast}: E_{X_{\infty}} \subset E_{X}$. The functor $r: \mathbf{\Delta}/X \to \mathbf{\Delta}/X_{\infty}$ induces a functor $r_{\ast}: E_{X} \to E_{X_{\infty}}$. The composite $r_{\ast} \cdot i_{\ast}$ is the identity on $E_{X_{\infty}}$, and
the map (\ref{eq 6}) defines a natural transformation $i_{\ast} \cdot r_{\ast} \to 1$ of functors $E_{X_{\infty}} \to E_{X_{\infty}}$.

The translation categories $E_{X}$ and $E_{X_{\infty}}$ are therefore homotopy equivalent, and thus have isomorphic simplicial sets of path components. It follows that there are isomorphisms of simplicial sets
\begin{equation}\label{eq 8}
X_{\infty} \xleftarrow{\cong} \varinjlim_{\Delta^{n} \to X_{\infty}}\ \Delta^{n} \xrightarrow{\cong}  \varinjlim_{L_{s}\Delta^{n} \to X}\ \Delta^{n}
\end{equation}


Suppose that $X: [0, \infty] \to s\mathbf{Set}$ is a diagram,
and set
\begin{equation*}
  \Re(X) = \varinjlim_{L_{s}\Delta^{n} \to X}\ U^{n}_{s}
\end{equation*}
in the category of ep-metric spaces.

It follows from the identifications of (\ref{eq 8}) that $\Re(X)$ is the set of vertices of $X_{\infty}$, equipped with an ep-metric space structure.

If $x$ and $y$ are two such vertices, and are the boundary of a $1$-simplex
\begin{equation*}
  \Delta^{1} \to X_{s} \to X_{\infty}
\end{equation*}
  then $x$ and $y$ are in the image of a map $U^{1}_{s} \to \Re(X)$, so that $d(x,y) \leq s$. If there is a sequence of $1$-simplices $\omega_{i}: \Delta^{1} \to X_{s_{i}}$ that define a polygonal path
\begin{equation*}
P:  x=x_{0} \leftrightarrows x_{1} \leftrightarrows \dots \leftrightarrows x_{k} =y
\end{equation*}
of $1$-simplices in $X_{\infty}$, then $d(x,y) \leq \sum_{i}\ s_{i}$ by definition. Formally,  we set
\begin{equation}\label{eq 9}
  d(x,y) = \inf_{P}\ \{ \sum_{i}\ s_{i}\}.
  \end{equation}
provided such polygonal paths exist. Otherwise, we set $d(x,y) = \infty$.

The resulting metric $d$ is the metric which is imposed on the set of vertices of $X_{\infty}$ by the requirement that
\begin{equation*}
  \Re(X) = \varinjlim_{L_{s}\Delta^{n} \to X}\ U^{n}_{s}
\end{equation*}
in the category of ep-metric spaces --- see Lemma \ref{lem 3}. We have shown the following:

\begin{proposition}\label{prop 7}
  Suppose that $X: [0,\infty] \to s\mathbf{Set}$ is a diagram. Then the ep-metric space $\Re(X)$ has underlying set given by the set of vertices of $X_{\infty}$, with metric defined within path components by (\ref{eq 9}). Elements $x$ and $y$ that are in distinct path components have $d(x,y) = \infty$.  
  \end{proposition}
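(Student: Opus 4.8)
The plan is to treat the two assertions of the proposition—the description of the underlying set and the description of the metric—separately, since the underlying-set computation is formal and feeds into the metric computation.

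First I would pin down the underlying set. The forgetful functor $U\colon ep\text{-}\mathbf{Met} \to \mathbf{Set}$ preserves all colimits, because by the construction in Lemma~\ref{lem 3} coproducts are built on set-theoretic disjoint unions and coequalizers on set-theoretic coequalizers. Hence $U(\Re(X)) = \varinjlim_{L_s\Delta^n \to X}\ \mathbf{n}$, the colimit being taken in $\mathbf{Set}$, where $\mathbf{n}$ is the underlying set of $U^n_s$, i.e.\ the vertex set $(\Delta^n)_0$. Since evaluation at $0$ is cocontinuous on $s\mathbf{Set}$ (colimits of simplicial sets are formed levelwise), this set equals $(\varinjlim_{L_s\Delta^n \to X}\ \Delta^n)_0$, which by the isomorphisms (\ref{eq 8}) is the vertex set of $X_\infty$. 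This gives the first claim.

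For the metric, write $D$ for the ep-metric on $\Re(X)$ produced by Lemma~\ref{lem 3} and let $\delta(x,y)$ be the right-hand side of (\ref{eq 9}). Both are infima of weights of polygonal paths—$D$ over the quotient paths of Example~\ref{ex 1} in $\coprod U^n_s$, and $\delta$ over zigzags of $1$-simplices in $X_\infty$—so my strategy is to match the two families of paths weight for weight. For $D \le \delta$ I would argue directly: a $1$-simplex $\omega\colon \Delta^1 \to X_s$ whose image has boundary $\{x,y\}$ is an object $L_s\Delta^1 \to X$ of the simplex category, hence gives a map $U^1_s \to \Re(X)$ carrying the two vertices (at distance $s$) to $x$ and $y$; concatenating the maps attached to the edges of a zigzag, and using that polygonal paths concatenate, yields $D(x,y) \le \sum_i s_i$ for each such zigzag, so $D \le \delta$.

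The reverse inequality is the step I expect to be the main obstacle, and it is where the dependence on the $1$-skeleton really enters. Given a quotient polygonal path $P = \{(x_i,y_i)\}$ from $x$ to $y$ in $\coprod U^n_s$, each pair lies in a single summand $U^{n_i}_{s_i}$, so $d(x_i,y_i)$ is either $0$ or $s_i$. When $x_i \ne y_i$, I would use that the two vertices span an edge $[x_i,y_i]$ of $\Delta^{n_i}$; composing with the structure map $\sigma_i\colon \Delta^{n_i}\to X_{s_i}$ produces a $1$-simplex of $X_{s_i}$, of weight $s_i = d(x_i,y_i)$, joining the images of $x_i,y_i$ in $X_\infty$. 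The gluing conditions $p(y_i)=p(x_{i+1})$ say these images chain up, so after discarding the zero-weight steps I obtain a zigzag from $x$ to $y$ of total weight $\sum_i d(x_i,y_i)$; thus $\delta \le D$, and $D = \delta$ within path components. The same correspondence handles distinct path components: every finite-weight step and every gluing stays within one zigzag-component of $X_\infty$, so when $x,y$ lie in different components no finite-weight path of either kind exists and both $D$ and $\delta$ are $\infty$. No separate check of the metric axioms for $\delta$ is needed, since $\delta$ is by construction the genuine ep-metric $D$ of Lemma~\ref{lem 3}.
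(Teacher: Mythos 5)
Your proposal is correct and follows essentially the same route as the paper: identify the underlying set using the isomorphisms (\ref{eq 8}) together with cocontinuity of the forgetful functor, and then identify the colimit metric imposed by Lemma \ref{lem 3} with the polygonal-path infimum (\ref{eq 9}). Your two-inequality comparison of the quotient metric with the zigzag metric simply fills in, carefully and correctly, the verification that the paper compresses into the assertion that (\ref{eq 9}) is ``the metric which is imposed'' by the colimit construction.
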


\begin{example}[Realization of Vietoris-Rips systems]\label{ex 8}
  Suppose that $X$ is a finite ep-metric space, and that $X$ is totally ordered.
  
    The realization $\Re(V_{\ast}(X))$ has $X$ as its underlying set, and $V_{\infty}(X) = \Delta^{X}$ is a finite simplex, which is connected, so that there is a finite polygonal path in $X$ between any two points $x,y \in X$. We have a relation
    \begin{equation*}
      d(x,y) \leq \sum_{i}\ s_{i}
    \end{equation*}
   in $X$ for any polygonal path $P$ which is defined by $1$-simplices $\omega_{i} \in V_{s_{i}}(X)$. This means that $d(x,y)$ in $X$ coincides with the distance between $x$ and $y$ in the metric space $\Re(X)$. It follows that the identity on the set $X$ induces an isomorphism of ep-metric spaces
\begin{equation*}
  \phi: \Re(V_{\ast}(X)) \xrightarrow{\cong} X.
\end{equation*}

This map $\phi$ is an isomorphism of ep-metric spaces, by Lemma \ref{lem 5} and the previous paragraphs.
\end{example}

\begin{example}[Degree Rips systems]\label{ex 9}
  Continue with a finite totally ordered ep-metric space $X$ as in Example \ref{ex 8}, let $k$ be a positive integer, and consider the degree Rips system $L_{\ast,k}(X)$. We choose $k$ such that the system of complexes $L_{\ast,k}(X)$ is non-empty, ie. such that $k \leq \vert X \vert$. Then $L_{t,k}(X) = V_{t}(X)$ for $t$ sufficiently large, and $X$ is the underlying set of $\Re(L_{\ast,k}(X))$.

  The maps
  \begin{equation*}
      \Re(L_{\ast,k}(X)) \to \Re(V_{\ast}(X)) \to X
  \end{equation*}
  are isomorphisms of metric spaces, by a cofinality argument.
\end{example}

Here is a special case:

\begin{lemma}
  Suppose that $K$ is a simplicial complex and that $s > 0$. Then $\Re(L_{s}K) = \Re(L_{s}\sk_{1}(K))$, and $\Re(L_{s}K)$ is the set of vertices $K_{0}$ with a metric $d$ defined by
  \begin{equation*}
    d(x,y) =
    \begin{cases}
      \infty & \text{if $[x] \ne [y]$ in $\pi_{0}(K)$,} \\
      \min_{P}\ s \cdot k & \text{if $[x]=[y]$.}
    \end{cases}
  \end{equation*}
  where $P$ varies through the polygonal paths
  \begin{equation*}
    P: x = x_{0} \leftrightarrows x_{1} \leftrightarrows \dots \leftrightarrows x_{k} =y
  \end{equation*}
  of $1$-simplices between $x$ and $y$.
\end{lemma}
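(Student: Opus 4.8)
The plan is to compute $\Re(L_s K)$ directly from Proposition \ref{prop 7}, which reduces everything to the vertices, the $1$-simplices, and the path components of the simplicial set $(L_s K)_{\infty}$. First I would record that the representable diagram $L_s K$ has $(L_s K)_t = \emptyset$ for $t < s$ and $(L_s K)_t = K$ for $t \geq s$; in particular $(L_s K)_{\infty} = K$. By Proposition \ref{prop 7} the underlying set of $\Re(L_s K)$ is therefore the vertex set $K_0$, and two vertices lying in distinct path components of $K$ --- that is, with $[x] \ne [y]$ in $\pi_0(K)$ --- are assigned distance $\infty$. This already accounts for the first line of the metric formula.

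For the second line I would analyze the infimum (\ref{eq 9}). The key observation is that a $1$-simplex $\omega\colon \Delta^1 \to (L_s K)_t$ exists exactly when $t \geq s$, since $(L_s K)_t$ is empty below $s$. Consequently every edge of a polygonal path $P\colon x = x_0 \leftrightarrows \dots \leftrightarrows x_k = y$ can be labelled by any $s_i \geq s$, and the smallest admissible label is $s_i = s$, realized because the edge already lives in $(L_s K)_s = K$. Minimizing each $s_i$ independently gives $\sum_i s_i \geq k\cdot s$ with equality attained, so the contribution of a fixed path $P$ of length $k$ to (\ref{eq 9}) is exactly $s\cdot k$. Taking the infimum over all $P$ then yields $d(x,y) = \min_P s\cdot k$; the infimum is a genuine minimum because, within a path component, the achievable values of $k$ form a nonempty set of nonnegative integers and so have a least element.

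Finally, for the identity $\Re(L_s K) = \Re(L_s \sk_1 K)$, I would note that $\sk_1$ commutes with $L_s$ sectionwise, so that $\sk_1(L_s K) = L_s(\sk_1 K)$, and that formula (\ref{eq 9}) only ever refers to the vertices, the $1$-simplices, and the path-connectivity of the value at $\infty$. Since $K$ and $\sk_1 K$ share the same vertices, the same $1$-simplices, and the same path components, the two ep-metric spaces coincide on the nose; no appeal to anything beyond Proposition \ref{prop 7} is needed.

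The step I expect to require the most care is the claim that the infimum in (\ref{eq 9}) collapses to $s$ times the edge-length of a shortest path. Here the hypothesis that $K$ is a simplicial complex is what keeps the bookkeeping honest: a polygonal path is literally a finite sequence of edges, each $1$-simplex is pinned down by its pair of endpoints, and there is no degenerate or multiply-incident structure that could conjure a cheaper route through some $s_i < s$. Making precise that each edge label can be pushed down to exactly $s$ with the path and its length $k$ held fixed --- and that no alternative choice beats the shortest edge-path --- is the real content, and I would isolate it as the one nontrivial estimate.
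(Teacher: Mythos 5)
Your proposal is correct, but it takes a genuinely different route from the paper's. You specialize Proposition \ref{prop 7}: since $(L_{s}K)_{t}$ is empty for $t<s$ and equals $K$ for $t\geq s$, every admissible edge label $s_{i}$ in the infimum (\ref{eq 9}) satisfies $s_{i}\geq s$ and can be pushed down to exactly $s$ with the underlying edge path fixed, so the infimum collapses to $\min_{P}\ s\cdot k$, attained because the achievable path lengths within a component form a nonempty set of nonnegative integers; and the identity $\Re(L_{s}K)=\Re(L_{s}\sk_{1}K)$ follows because (\ref{eq 9}) sees only the vertices, the $1$-simplices, and the path components of the value at $\infty$, all of which $K$ shares with $\sk_{1}K$. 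The paper instead argues from the colimit definition: it writes $\Re(L_{s}K)=\varinjlim_{\Delta^{n}\to K}U^{n}_{s}$, shows that $\Re(\partial\Delta^{n})\to\Re(\Delta^{n})$ is an isomorphism for $n\geq 2$ (which is its mechanism for the reduction to the $1$-skeleton), and then verifies by hand that the displayed metric space satisfies the universal property of the colimit, checking that any compatible family $f_{\sigma}\colon U^{n}_{s}\to Y$ induces a function $f$ on vertices with $d(f(x),f(y))\leq k\cdot s$ along each polygonal path. Your version is shorter and has the merit of making explicit the inf-to-min collapse that the lemma's $\min_{P}$ notation quietly asserts, while the paper's version is self-contained at the level of the universal property and does not lean on the translation-category bookkeeping behind Proposition \ref{prop 7}. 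One small inaccuracy in your closing paragraph: the simplicial set underlying a simplicial complex \emph{does} have degenerate $1$-simplices (degeneracies of vertices), so it is not quite right that no degenerate structure is present; but such simplices are loops of cost $s$ and can only lengthen a path, so your minimization argument is unaffected.
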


\begin{proof}
  Write $\Re(K) = \Re(L_{s}K)$. The simplicial set $K$ is a colimit of its simplices, and so there is an isomorphism
  \begin{equation*}
    \varinjlim_{\Delta^{n} \to K}\ L_{s}\Delta^{n} \xrightarrow{\cong} L_{s}K.
  \end{equation*}
  It follows that there is an isomorphism
  \begin{equation*}
    \varinjlim_{\Delta^{n} \to K}\ U^{n}_{s} \xrightarrow{\cong} \Re(L_{s}K).
  \end{equation*}

Suppose that $n \geq 2$. Then $\partial\Delta^{n}$ and $\Delta^{n}$ have the same vertices, and any two vertices $x,y$ are on a common face $\Delta^{n-1} \subset \Delta^{n}$. It follows that $d(x,y)=s$ in $\Re(\partial\Delta^{n})$ and $\Re(\Delta^{n})$, and the induced map
\begin{equation*}
  \Re(\partial\Delta^{n}) \to \Re(\Delta^{n})
\end{equation*}
is an isomorphism for $n \geq 2$.

The displayed metric $d$ on the vertices of $K$ defines a metric space $\Re(K)$, with maps $\sigma_{\ast}: U^{n}_{s} \to \Re(K)$ for all simplices $\sigma: \Delta^{n} \to K$, which maps are natural with respect to the simplicial structure of $K$.

Any family of metric space morphisms $f_{\sigma}: U^{n}_{s} \to Y$ determines a unique function $f: K_{0} \to Y$. Also, $d(f(x),f(y)) \leq s$ if $x,y$ are in a common simplex $\Delta^{1} \to K$. If $P$ is a polygonal path between $x$ and $y$ as above, then $d(f(x),f(y)) \leq k \cdot s$. This is true for all such polygonal paths, so $d(f(x),f(y)) \leq d(x,y)$.

If $x$ and $y$ are in distinct components of $K$, then $d(f(x),f(y)) \leq d(x,y) = \infty$.
\end{proof}

The following result says that the realization $\Re(X)$ of a diagram $X: [0,\infty] \to s\mathbf{Set}$ depends only on the associated diagram of graphs $\sk_{1}(X)$.

\begin{lemma}
  Suppose that $X: [0,\infty] \to s\mathbf{Set}$ is a diagram. Then the inclusion $\sk_{1}X \subset X$ induces an isomorphism
\begin{equation*}
\Re(\sk_{1}X) \xrightarrow{\cong} \Re(X).
\end{equation*}
\end{lemma}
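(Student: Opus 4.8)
The plan is to read off both realizations from Proposition~\ref{prop 7} and to observe that every piece of data appearing there involves only the vertices and the $1$-simplices of the diagram, both of which are left unchanged by passage to the $1$-skeleton. First I would note that $\Re$ is a functor (it is a left adjoint, to the singular functor $S$, and is in any case defined by the colimit $\varinjlim_{L_{s}\Delta^{n} \to X}\ U^{n}_{s}$), so the inclusion $\sk_{1}X \subset X$ induces a map $\Re(\sk_{1}X) \to \Re(X)$ of ep-metric spaces, and it suffices to show that this map is an isomorphism.

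The diagram $\sk_{1}X$ is formed sectionwise, $(\sk_{1}X)_{s} = \sk_{1}(X_{s})$, and since evaluation at the terminal object $\infty$ of $[0,\infty]$ is just the value of the diagram there, we have $(\sk_{1}X)_{\infty} = \sk_{1}(X_{\infty})$. The functor $\sk_{1}$ does not alter a simplicial set in degrees $\leq 1$; in particular $\sk_{1}(X_{\infty})$ and $X_{\infty}$ have exactly the same set of vertices. By Proposition~\ref{prop 7}, the underlying sets of $\Re(\sk_{1}X)$ and $\Re(X)$ are both this common vertex set, and the induced map is the identity on underlying sets.

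Next I would compare the two metrics using formula (\ref{eq 9}). The value $d(x,y)$ is an infimum over polygonal paths $x = x_{0} \leftrightarrows \dots \leftrightarrows x_{k} = y$ built from $1$-simplices $\omega_{i}: \Delta^{1} \to X_{s_{i}}$, with $d(x,y) = \infty$ when no such path exists. The $1$-simplices of $X_{s_{i}}$ coincide with those of $\sk_{1}(X_{s_{i}}) = (\sk_{1}X)_{s_{i}}$ for every $s_{i}$, so the collection of polygonal paths between any two vertices $x,y$ is literally the same for the two diagrams; equivalently $\pi_{0}(X_{\infty}) = \pi_{0}(\sk_{1}(X_{\infty}))$, since path components of a simplicial set are detected by its $1$-skeleton, so $x$ and $y$ lie in a common path component for one diagram exactly when they do for the other. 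Hence the infimum in (\ref{eq 9}) returns the same value, finite or $\infty$, in both cases, and the two metrics agree.

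The induced map $\Re(\sk_{1}X) \to \Re(X)$ is therefore the identity on the common underlying set and preserves distances exactly, so it is an isomorphism of ep-metric spaces. The only point requiring care is the identification $(\sk_{1}X)_{\infty} = \sk_{1}(X_{\infty})$ together with the observation that (\ref{eq 9}) never refers to simplices of dimension $\geq 2$; once Proposition~\ref{prop 7} is in hand there is no genuine obstacle, since truncating to the $1$-skeleton discards exactly the data that the realization functor ignores.
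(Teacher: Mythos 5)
Your proof is correct, but it takes a genuinely different route from the paper's. You read the statement off from the explicit description of the realization: by Proposition~\ref{prop 7} the underlying set of $\Re(X)$ is the vertex set of $X_{\infty}$, and the metric (\ref{eq 9}) is an infimum over polygonal paths of $1$-simplices drawn from the sections $X_{s}$; since $(\sk_{1}X)_{s} = \sk_{1}(X_{s})$ has the same vertices and the same $1$-simplices as $X_{s}$ (and the same $\pi_{0}$, so the $d(x,y)=\infty$ clause matches as well), both ingredients are unchanged and the induced map is an isometric bijection. The paper argues categorically instead: it writes $\sk_{1}X$ as the colimit $\varinjlim_{L_{s}\Delta^{n} \to X}\ L_{s}\sk_{1}\Delta^{n}$ (since $\sk_{1}$ preserves colimits), invokes the preceding lemma's simplex-level computation to obtain isomorphisms $\Re(L_{s}\sk_{1}\Delta^{n}) \xrightarrow{\cong} \Re(L_{s}\Delta^{n})$, and then passes to colimits, using that $\Re$ commutes with them. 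Your argument is more elementary and makes the slogan that $\Re$ only sees the $1$-skeleton completely transparent, at the cost of leaning on the full strength of Proposition~\ref{prop 7} --- in particular on the implicit naturality in $X$ of the identification of $\Re(X)$ with the vertex set of $X_{\infty}$, which you use when asserting that the induced map is the identity on underlying sets; that point deserves a sentence, though it is immediate from the naturality of the isomorphisms (\ref{eq 8}). The paper's version is more structural: it reduces the lemma formally to the case of a single simplex plus colimit preservation, a pattern that adapts to other comparisons of this kind without recomputing any metrics.
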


\begin{proof}
  The diagram of $1$-skeleta $\sk_{1}X$ is a colimit
\begin{equation*}
  \varinjlim_{L_{s}\Delta^{n} \to X}\ L_{s}\sk_{1}\Delta^{n},
\end{equation*}
since the functor $\sk_{1}$ preserves colimits. There are commutative diagrams
\begin{equation*}
  \xymatrix{
    \Re(L_{s}\sk_{1}\Delta^{n}) \ar[r] \ar[d]_{\cong} & \Re(\sk_{1}X) \ar[d] \\
    \Re(L_{s}\Delta^{n}) \ar[r] & \Re(X)
  }
  \end{equation*}
that are natural in the simplices of $X$, and it follows that the induced map $\Re(\sk_{1}X) \to \Re(X)$ is an isomorphism, as required.
\end{proof}

\subsection{Partial realizations}

Suppose again that $X: [0,\infty] \to s\mathbf{Set}$ is a diagram in simplicial sets. We construct partial realizations by writing
\begin{equation*}
  \Re(X)_{s} = \varinjlim_{L_{t}\Delta^{n} \to X,\ t\leq s}\ U^{n}_{t}.
\end{equation*}
This is the colimit of a functor taking values in ep-metric spaces, which is defined on the full subcategory $\mathbf{\Delta}/X_{\leq s}$ of $\mathbf{\Delta}/X$ having objects $L_{t}\Delta^{n} \to X$ with $t \leq s$. A map $L_{t}\Delta^{n} \to X$ can be identified with a simplex $\Delta^{n} \to X_{t}$, and the relation $t \leq s$ defines a simplex $\Delta^{n} \to X_{t} \to X_{s}$, so that we have a functor
\begin{equation*}
  r: \Delta/X_{\leq s} \to \mathbf{\Delta}/X_{s},
\end{equation*}
along with an inclusion $i: \mathbf{\Delta}/X_{s} \subset \mathbf{\Delta}/X_{\leq s}$. The composite $r \cdot i$ is the identity, and the composite $i \cdot r$ is homotopic to the identity, just as before.

There is a functor $\mathbf{\Delta}/X_{\leq s} \to s\mathbf{Set}$ which takes a simplex $\Delta^{n} \to X_{t}$ to the simplicial set $\Delta^{n}$. By manipulating path components of homotopy colimits, one finds isomorphisms
  \begin{equation*}
    X_{s} \xleftarrow{\cong} \varinjlim_{\Delta^{n} \to X_{s}}\ \Delta^{n}
    \xrightarrow{\cong} \varinjlim_{L_{t}\Delta^{n} \to X,\ t\leq s}\ \Delta^{n}.
    \end{equation*}
 that are analogous to the isomorphisms of (\ref{eq 9}). It follows, as in Theorem \ref{prop 7}, that the set underlying the metric space $\Re(X)_{s}$ is the set of vertices of the simplicial set $X_{s}$.   

 The metric $d$ on $(X_{s})_{0}$ is defined as before: $d(x,y) = \infty$ if $x$ and $y$ not in the same path component of $X_{s}$.  Otherwise
 \begin{equation}\label{eq 10}
   d(x,y) = \inf_{P}\ \{\sum t_{i}\},
 \end{equation}
 indexed over all polygonal paths
 \begin{equation*}
   P: x = x_{0} \leftrightarrows x_{1} \leftrightarrows \dots \leftrightarrows x_{k} =y
 \end{equation*}
 that are defined by $1$-simplices $\omega: \Delta^{1} \to X_{t_{i}}$ with $t_{i} \leq s$.

 We then have the following analogue of Proposition \ref{prop 7}:

 \begin{proposition}\label{prop 12}
  Suppose that $X: [0,\infty] \to s\mathbf{Set}$ is a functor. Then the ep-metric space $\Re(X)_{s}$ has underlying set given by the set of vertices of $X_{s}$, with metric defined within path components by (\ref{eq 10}). Elements $x,y$ in distinct path components have $d(x,y) = \infty$.  
  \end{proposition}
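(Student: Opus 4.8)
The plan is to run the proof of Proposition~\ref{prop 7} verbatim, replacing the section $X_{\infty}$ throughout by the section $X_{s}$, and the full simplex category $\mathbf{\Delta}/X$ by its subcategory $\mathbf{\Delta}/X_{\leq s}$. All of the categorical scaffolding has already been assembled in the paragraphs preceding the statement: the functor $r: \mathbf{\Delta}/X_{\leq s} \to \mathbf{\Delta}/X_{s}$, its section $i: \mathbf{\Delta}/X_{s} \subset \mathbf{\Delta}/X_{\leq s}$ with $r\cdot i = 1$, and the natural transformation $i\cdot r \to 1$ supplied by the structure maps $\Delta^{n}\to X_{t}\to X_{s}$ for $t\leq s$. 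So the only genuine tasks are (i) extracting the underlying set from this data, and (ii) identifying the metric.

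For the underlying set I would form the translation category $E_{\leq s}$ of the functor $\mathbf{\Delta}/X_{\leq s}\to s\mathbf{Set}$ sending $\Delta^{n}\to X_{t}$ to $\Delta^{n}$, together with the translation category $E_{s}$ of the corresponding functor on $\mathbf{\Delta}/X_{s}$. The pair $(r,i)$ lifts to functors $r_{\ast}: E_{\leq s}\to E_{s}$ and $i_{\ast}: E_{s}\to E_{\leq s}$ with $r_{\ast}\cdot i_{\ast}=1$, while $i\cdot r\to 1$ lifts to a homotopy $i_{\ast}\cdot r_{\ast}\to 1$; hence $E_{\leq s}$ and $E_{s}$ are homotopy equivalent and have isomorphic simplicial sets of path components. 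Since $\pi_{0}E_{\leq s}\cong \varinjlim_{L_{t}\Delta^{n}\to X,\ t\leq s}\Delta^{n}$ and $\pi_{0}E_{s}\cong \varinjlim_{\Delta^{n}\to X_{s}}\Delta^{n}\cong X_{s}$, this produces the isomorphisms of simplicial sets displayed just before the statement. Now the forgetful functor $ep-\mathbf{Met}\to \mathbf{Set}$ is a left adjoint (its right adjoint equips a set with the identically-zero metric), so it commutes with the colimit defining $\Re(X)_{s}$; since the underlying set of $U^{n}_{t}$ is $\mathbf{n}=(\Delta^{n})_{0}$, the underlying set of $\Re(X)_{s}=\varinjlim U^{n}_{t}$ is $\varinjlim (\Delta^{n})_{0}=(\varinjlim \Delta^{n})_{0}\cong (X_{s})_{0}$, the vertices of $X_{s}$.

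It remains to pin down the metric. The upper bound $d(x,y)\leq \inf_{P}\sum t_{i}$ is immediate: a $1$-simplex $\Delta^{1}\to X_{t}$ with $t\leq s$ is a map $U^{1}_{t}\to \Re(X)_{s}$, so its endpoints satisfy $d\leq t$, and concatenating such edges along a polygonal path $P$ gives $d(x,y)\leq \sum t_{i}$. For the reverse inequality I would let $e(x,y)$ denote the right-hand side of (\ref{eq 10}), with $e=\infty$ across path components, and check that $e$ is itself an ep-metric on $(X_{s})_{0}$: symmetry and $e(x,x)=0$ are clear, and the triangle inequality is concatenation of polygonal paths. Each generating map $U^{n}_{t}\to ((X_{s})_{0},e)$ is then non-expanding, so by the universal property of the colimit the identity-on-vertices map $\Re(X)_{s}\to ((X_{s})_{0},e)$ is non-expanding, giving $e(x,y)\leq d(x,y)$; combined with the upper bound this forces $d=e$, which is exactly (\ref{eq 10}).

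The main obstacle is this lower bound, i.e. knowing that the quotient metric produced by Lemma~\ref{lem 3} is detected by $1$-simplices alone. The point that makes it work is that any two distinct vertices $i\ne j$ of $U^{n}_{t}$, after mapping into $X_{t}$ via a simplex $\Delta^{n}\to X_{t}$, are joined by a single $1$-simplex (the image of the edge $\{i,j\}$, a face of $\Delta^{n}$), so that $e\leq t=d_{U^{n}_{t}}(i,j)$ and the comparison map above is indeed non-expanding. Everything else is a transcription of the arguments already carried out for the total realization.
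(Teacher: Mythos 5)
Your proposal is correct and takes essentially the same route as the paper: the paper's ``proof'' of Proposition \ref{prop 12} is exactly the paragraphs preceding it (the $r$, $i$ translation-category argument identifying the underlying set with the vertices of $X_{s}$, followed by the identification of the colimit metric of Lemma \ref{lem 3} with the polygonal-path formula (\ref{eq 10})), which is what you reproduce. Your two-inequality argument --- checking that the path-infimum $e$ is an ep-metric, that every generating map $U^{n}_{t} \to ((X_{s})_{0},e)$ is non-expanding because any two vertices of a simplex span an edge, and invoking the universal property of the colimit --- merely spells out what the paper asserts by appeal to the quotient-metric construction of Lemma \ref{lem 3}.
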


 The map $X_{s} \to X_{\infty}$ defines a map $\Re(X)_{s} \to \Re(X)$ and $\Re(X)_{\infty} = \Re(X)$. There is an isomorphism of ep-metric spaces
 \begin{equation}
   \varinjlim_{s}\ \Re(X)_{s} \xrightarrow{\cong} \Re(X),
   \end{equation}
since the element $\infty$ is terminal in $[0,\infty]$ and $\Re(X)_{\infty} = \Re(X)$.

\begin{example}[Partial metrics for Vietoris-Rips complexes]\label{ex 13}
  Suppose that $X$ is a finite totally ordered ep-metric space. Consider the associated functor $V_{\ast}(X): [0,\infty] \to s\mathbf{Set}$.
  
  The associated ep-metric space $\Re(X)_{s}$ has underlying set $X$. We have $d(x,y) = \infty$ if $x,y$ are in distinct path components of $V_{s}(X)$. Otherwise
  \begin{equation*}
    d(x,y) =  \inf_{P}\ \{ \sum\ d(x_{i},x_{i+1}) \},
  \end{equation*}
  indexed over all polygonal paths
  \begin{equation*}
    P: x=x_{0},x_{1}, \dots ,x_{n}=y,
  \end{equation*}
  with $d(x_{i},x_{i+1}) \leq s$.
  
  If $d(x,y) = t \leq s$ in $X$ then $d(x,y)=t$ in $\Re(X)_{s}$. Otherwise, the distance between $x$ and $y$ in the same path component of $\Re(X)_{s}$ is more interesting --- it is achieved by a particular path $P$ since $X$ is finite, and $d(x,y)$ is a type of weighted path length.

  We see in Example \ref{ex 8} that there is an isomorphism of ep-metric spaces $\phi: \Re(V_{\ast}(X)) \xrightarrow{\cong} X$. It follows that there is an ep-metric space map $\phi_{s}: \Re(X)_{s} \to X$ which is the identity on the underlying point set $X$, and compresses distances.
  \end{example}

\section{The singular functor}

  The right adjoint $S$ of the realization functor $\Re$ is defined for an ep-metric space $Y$ by
  \begin{equation*}
    S(Y)_{s,n} = \hom(U_{s}^{n},Y),
  \end{equation*}
  where $\hom(U_{s}^{n},Y)$ is the collection of ep-metric space morphisms $U^{n}_{s} \to Y$.
  Equivalently, $S(Y)_{s,n}$ is the set of families of points $( x_{0}, x_{1}, \dots ,x_{n} )$ in $Y$ such that $d(x_{i},x_{j}) \leq s$.

A simplex $( x_{0}, x_{1}, \dots ,x_{n} )$ is alternatively a function $\mathbf{n} \to Y$ (a ``bag of words''), with a distance restriction. There is no requirement that the elements $x_{i}$ are distinct. This simplex is non-degenerate if and only if $x_{i} \ne x_{i+1}$ for $0 \leq i \leq n-1$.
\medskip

Suppose that an ep-metric space $X$ is totally ordered, as in Example \ref{ex 8} above.
Then, in view of the discussion of Example \ref{ex 8}, the canonical map $\eta: V_{\ast}(X) \to S\Re(V_{\ast}(X))$ consists of functions
$\eta: V_{t}(X) \to S_{t}(X)$ which send simplices $\sigma :x_{0} \leq x_{1} \leq \dots \leq x_{n}$ with $d(x_{i},x_{j}) \leq t$ to the list of points $(x_{0},x_{1}, \dots ,x_{n})$.

If $\sigma$ is non-degenerate, so that the vertices $x_{i}$ are distinct, then $\eta(\sigma)$ is a non-degenerate simplex of $S_{t}(X)$.
\medskip

The poset $NZ$ of non-degenerate simplices of a simplicial set $Z$ has $\sigma \leq \tau$ if there is a subcomplex inclusion $\langle \sigma \rangle \subset \langle \tau \rangle$, where $\langle \sigma \rangle$ is the subcomplex of $Z$ which is generated by the simplex $\sigma$. Equivalently, $\sigma \leq \tau$ if there is an ordinal number map $\theta$ such that $\theta^{\ast}(\tau) = \sigma$. 

The map $\eta$ induces a morphism $\eta_{\ast}: NV_{t}(X) \to NS_{t}(X)$ of posets of non-degenerate simplices.

\begin{lemma}\label{lem 14}
Suppose that $X$ is a totally ordered ep-metric space.  Then the induced simplicial set map
\begin{equation*}
  \eta_{\ast}: BNV_{t}(X) \to BNS_{t}(X)
\end{equation*}
of associated nerves is a weak equivalence.
\end{lemma}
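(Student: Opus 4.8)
The plan is to avoid analyzing $\eta_{\ast}$ directly and instead to exhibit an explicit poset retraction and feed it into Quillen's Theorem A. Working with $\eta_{\ast}$ itself is unattractive because its comma categories are not contractible: for $x_{0} < x_{1}$ with $d(x_{0},x_{1}) \leq t$, the over-category $\eta_{\ast}/w$ at the reduced word $w = (x_{1},x_{0})$ consists of the two singletons $\{x_{0}\}$ and $\{x_{1}\}$ and nothing joining them, since the increasing two-letter word $(x_{0},x_{1})$ is not a face of $(x_{1},x_{0})$. So the fibres of $\eta_{\ast}$ are genuinely disconnected, and a different map must carry the argument.

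First I would define a retraction $r : NS_{t}(X) \to NV_{t}(X)$ sending a reduced word $w = (x_{0},\dots ,x_{n})$ to the set $L(w) = \{x_{0},\dots ,x_{n}\}$ of its distinct letters, listed in increasing order. Every pair of letters of $w$ satisfies $d(x_{i},x_{j}) \leq t$, so $L(w)$ is a nonempty simplex of $V_{t}(X)$ and $r$ is well defined; and if $u \leq w$ then $u$ is a reduced subsequence of $w$, so $L(u) \subseteq L(w)$, making $r$ a map of posets. One checks immediately that $r \cdot \eta_{\ast} = \mathrm{id}$ on $NV_{t}(X)$, since the increasing word $\eta_{\ast}(\sigma)$ has letter set exactly $\sigma$. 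Consequently $Br \cdot B\eta_{\ast} = \mathrm{id}$, and by a two-out-of-three argument it suffices to prove that $Br$ is a weak equivalence; the lemma for $\eta_{\ast}$ then follows.

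To prove $Br$ is a weak equivalence I would apply Quillen's Theorem A, reducing the problem to showing that the fibre $r/F$ is contractible for each $F \in NV_{t}(X)$. Because $NV_{t}(X)$ is a poset, $r/F$ is just the subposet of $NS_{t}(X)$ on those reduced words $w$ with $L(w) \subseteq F$; and since $F$ has diameter $\leq t$, the distance constraints are automatic, so $r/F$ is exactly the poset of non-degenerate simplices of $\cosk_{0}(F)$ (reduced words on the alphabet $F$, under the reduced-subsequence order).

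The main obstacle is the contractibility of this fibre, and the delicate point is that I should not argue merely that $\cosk_{0}(F)$ is contractible: for a general simplicial set $Z$ the space $BNZ$ need not recover the homotopy type of $Z$ (already $Z = \Delta^{1}/\partial\Delta^{1}$ fails), so contractibility must be verified at the level of the poset. I would do this by hand. Fix any $a \in F$ and set $a \ast w = w$ if $w$ already begins with $a$, and $a \ast w = (a,x_{0},\dots ,x_{n})$ otherwise. A short case analysis shows that $w \mapsto a \ast w$ is order preserving, that $w \leq a \ast w$ for all $w$, and that the constant word $(a)$ satisfies $(a) \leq a \ast w$ for all $w$. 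The latter two inequalities are natural transformations $\mathrm{id} \Rightarrow (a \ast -)$ and $c_{(a)} \Rightarrow (a \ast -)$, so on passage to nerves they furnish homotopies showing $B(r/F)$ is contractible. Theorem A then gives that $Br$ is a weak equivalence, completing the proof.
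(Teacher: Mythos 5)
Your proof is correct, but its second half takes a genuinely different route from the paper's. The first half is identical: the paper defines exactly your retraction $r$ (called $L$ there), sending a non-degenerate simplex of $S_{t}(X)$ to its set of distinct entries listed in increasing order, and observes $L \cdot \eta_{\ast} = \mathrm{id}$ on $NV_{t}(X)$. Where you diverge is in handling the other composite. The paper never invokes Theorem A: it shows directly that $\eta_{\ast} \cdot L$ is homotopic to the identity on $NS_{t}(X)$, by concatenating $\tau = (y_{0},\dots,y_{r})$ with its ordered letter list to form $V(\tau) = (y_{0},\dots,y_{r},s_{0},\dots,s_{k})$, whose non-degenerate representative $V_{\ast}(\tau)$ yields poset inequalities $\tau \leq V_{\ast}(\tau) \geq \eta_{\ast}L(\tau)$, natural in $\tau$; this zig-zag of natural transformations induces homotopies after taking nerves, so $BL$ and $B\eta_{\ast}$ are mutually inverse homotopy equivalences. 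You instead apply Quillen's Theorem A to $r$, identify the comma poset $r/F$ with the poset of reduced words on the alphabet $F$ (non-degenerate simplices of $\cosk_{0}F$), and contract it with the prepend-$a$ cone $w \mapsto a \ast w$ via the inequalities $w \leq a \ast w \geq (a)$ --- which I have checked, including the one delicate case of order-preservation (when $w$ begins with $a$ but $u \leq w$ does not, the embedding of $u$ into $w$ must avoid the initial letter, so $(a,u)$ still embeds). Both arguments ultimately rest on the same principle, that pointwise inequalities of poset maps give homotopies of nerves; the paper's version is more self-contained and produces an explicit homotopy inverse, at the cost of the bookkeeping around $V(\tau)$ and its non-degenerate representative (which requires Lemma \ref{lem 18}), while yours is more modular, replaces that bookkeeping by a cleaner fibrewise cone, and your opening observation that Theorem A fails when applied to $\eta_{\ast}$ directly (disconnected fibres over decreasing words) is a worthwhile sanity check that the detour through the retraction is genuinely needed.
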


\begin{proof}
Given a non-degenerate simplex $\sigma \in S_{t}(X)$, write $L(\sigma)$ for its list of distinct elements.

Suppose that $\langle \tau \rangle \subset \langle \sigma \rangle$, where $\tau$ and $\sigma$ are non-degenerate simplices of $S_{t}(X)$. Then $\tau = s \cdot d(\sigma)$ for an (iterated) face map $d$ and degeneracy $s$.
Then
\begin{equation*}
  L(\tau) = L(s \cdot d(\sigma)) = L(d(\sigma)) \subset L(\sigma).
\end{equation*}
It follows that the assignment $\sigma \mapsto L(\sigma)$ defines a poset morphism
\begin{equation*}
  L: NS_{t}(X) \to NV_{t}(X).
\end{equation*}
The composite
\begin{equation*}
  NV_{t}(X) \xrightarrow{\eta} NS_{t}(X) \xrightarrow{L} NV_{t}(X)
\end{equation*}
is the identity on $NV_{t}(X)$.

Consider the composite poset morphism
\begin{equation}\label{eq 12}
  NS_{t}(X) \xrightarrow{L} NV_{t}(X) \xrightarrow{\eta} NS_{t}(X).
\end{equation}
Given a non-degenerate simplex $\tau = (y_{0},\dots y_{r})$ of $S_{t}(X)$, write $L(\tau) = ( s_{0}, \dots s_{k})$ for the list of distinct elements of $\tau$, in the order specified by the total order for $X$. Then the list
\begin{equation*}
  V(\tau) = (y_{0}, \dots ,y_{r},s_{0}, \dots ,s_{k})
\end{equation*}
is a simplex of $S_{t}(X)$, since each $s_{j}$ is some $y_{i_{j}}$, and there are relations
\begin{equation*}
  \langle \tau \rangle \leq \langle V(\tau) \rangle \geq \langle L(\tau) \rangle
  \end{equation*}
as subcomplexes of $S_{t}(X)$.

The simplex $V(\tau)$ has the form $V(\tau) = s(V_{\ast}(\tau))$ for a unique iterated degeneracy $s$ and a unique non-degenerate simplex $V_{\ast}(\tau)$ (see Lemma \ref{lem 18}), and $\langle V(\tau) \rangle = \langle V_{\ast}(\tau) \rangle$.

 Suppose that $\gamma$ is non-degenerate in $S_{t}(X)$ and that $\gamma \in \langle
 \tau \rangle$. Then $\gamma = d(\tau)$ for some face map $d$, and $\gamma = (x_{0}, \dots ,x_{k})$ is a sublist of $\tau = ( y_{0}, \dots ,y_{r})$. The ordered list $L(\gamma)$ of distinct elements of $\gamma$ is a sublist of $L(\tau)$, and $V(\gamma)$ is a sublist of $V(\tau)$. There is a diagram of relations
 \begin{equation*}
   \xymatrix{
     \langle \tau \rangle \ar[r] & \langle V_{\ast}(\tau) \rangle
     & \langle L(\tau) \rangle \ar[l] \\
     \langle \gamma \rangle \ar[r] \ar[u]
     & \langle V_{\ast}(\gamma) \rangle \ar[u]
     & \langle L(\gamma) \rangle \ar[l] \ar[u]
   }
 \end{equation*}
 
 It follows that the composite (\ref{eq 12}) is homotopic to the identity on the poset $NS_{t}(X)$, and the Lemma follows.
\end{proof}

The subdivision $\sd(Z)$ of a simplicial set $Z$ is defined by
\begin{equation*}
  \sd(Z) = \varinjlim_{\Delta^{n} \to Z}\ BN\Delta^{n}.
\end{equation*}
The poset morphisms $N\Delta^{n} \to NZ$ that are induced by simplices $\Delta^{n} \to Z$ together induce a map
\begin{equation*}
  \pi: \sd(Z) \to BNZ.
\end{equation*}
It is known \cite{J34} (and not difficult to prove) that the map $\pi$ is a bijection for simplicial sets $Z$ that are polyhedral.

A polyhedral simplicial set is a subobject of the nerve of a poset. All oriented simplicial complexes are polyhedral in this sense. Examples include the Vietoris-Rips systems $V_{s}(X)$ associated to a totally ordered ep-metric space $X$, since
\begin{equation*}
  V_{s}(X) \subset V_{\infty}(X) = BX,
\end{equation*}
where $BX$ is the nerve of the totally ordered poset $X$.

\begin{lemma}\label{lem 15}
  Suppose that $X$ is an ep-metric space.
  Then the map
\begin{equation*}
  \pi: \sd(S_{t}(X)) \to BNS_{t}(X)
\end{equation*}
  is a weak equivalence.
  \end{lemma}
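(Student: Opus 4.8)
The plan is to exhibit both $\sd S_{t}(X)$ and $BNS_{t}(X)$ as homotopy colimits of the same shape, indexed by the cliques of $X$, and to reduce the lemma to a contractible ``local'' case. For a nonempty finite subset $C \subseteq X$, write $W_{C}$ for the codiscrete simplicial set with $W_{C,n} = \hom(\mathbf{n},C)$; if $C$ has diameter at most $t$ (so that $C$ is a simplex of $V_{t}(X)$), then $W_{C}$ is exactly the subcomplex of $S_{t}(X)$ consisting of those words $(x_{0}, \dots ,x_{n})$ whose entries all lie in $C$. Every simplex of $S_{t}(X)$ has a well-defined finite set of entries, which is a clique, so $S_{t}(X)$ is the union of the subcomplexes $W_{C}$ as $C$ runs over the poset $P := NV_{t}(X)$ of cliques, ordered by inclusion. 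Each word lies in $W_{C}$ for a least clique $C$, namely its own set of entries, so this union is genuinely the colimit $\colim_{C \in P} W_{C}$ in simplicial sets, and $P$ is graded by cardinality, hence a direct category. (If one prefers finite $X$, Lemma \ref{lem 5} together with the fact that $\sd$, $BN$ and weak equivalences all commute with the filtered colimit along the monomorphisms $S_{t}(F) \to S_{t}(F')$ reduces to that case; the argument below does not require this.)

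First I would treat the local case: for each $C \in P$ the map $\pi_{C}: \sd W_{C} \to BNW_{C}$ is a weak equivalence, because both sides are contractible. The source is contractible since the last-vertex map $\sd W_{C} \to W_{C}$ is a weak equivalence and $W_{C}$ is the nerve of a chaotic groupoid. For the target, the key point is that the poset $NW_{C}$ of non-degenerate simplices of $W_{C}$ is directed: given two words $w,w'$ in $C$, the concatenation $(w,w')$ --- or its non-degenerate part, should the last entry of $w$ coincide with the first entry of $w'$ --- contains each of $w$ and $w'$ as a face, hence is an upper bound for both. A directed poset has contractible nerve, so $BNW_{C} \simeq *$ and $\pi_{C}$ is a weak equivalence. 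This contractibility of $BNW_{C}$ is the local form of the lemma, and is where the special structure of $S_{t}(X)$ --- that it is assembled from codiscrete pieces --- enters.

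Next I would globalize. Since $\sd$ is a left adjoint it preserves colimits, so $\sd S_{t}(X) = \colim_{C \in P} \sd W_{C}$. On the other side, $NS_{t}(X) = \colim_{C \in P} NW_{C}$ as posets, and because every finite chain of non-degenerate simplices has a maximal element whose entries determine a single clique, each simplex of $B(\colim_{C} NW_{C})$ already lies in one $BNW_{C}$; thus the nerve preserves this colimit and $BNS_{t}(X) = \colim_{C \in P} BNW_{C}$. Both diagrams are Reedy cofibrant over the direct category $P$: the latching map at $C$ is the inclusion of $\sd\bigl(\bigcup_{C' \subsetneq C} W_{C'}\bigr)$ into $\sd W_{C}$, respectively of $BN\bigl(\bigcup_{C' \subsetneq C} W_{C'}\bigr)$ into $BNW_{C}$, and each is a monomorphism. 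For a Reedy cofibrant diagram over a direct category the canonical map from the homotopy colimit to the colimit is a weak equivalence, so $\sd S_{t}(X) \simeq \hocolim_{P} \sd W_{C}$ and $BNS_{t}(X) \simeq \hocolim_{P} BNW_{C}$.

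Finally, the maps $\pi_{C}$ assemble into a natural transformation of diagrams $P \to s\mathbf{Set}$ (naturality of $\pi$ in the simplicial set variable), inducing $\hocolim_{P} \pi_{C}$, which agrees with $\pi$ under the identifications above by naturality of the comparison $\hocolim \to \colim$. Since each $\pi_{C}$ is a weak equivalence and homotopy colimits preserve objectwise weak equivalences, $\pi$ is a weak equivalence. I expect the main obstacle to be the two $\colim = \hocolim$ identifications --- checking Reedy cofibrancy and, for the target, that the nerve preserves the relevant colimit of posets --- together with the contractibility of $BNW_{C}$; once these are secured the conclusion is formal.
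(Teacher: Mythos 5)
Your proof is correct, but it takes a genuinely different route from the paper's. The paper's proof is much shorter: it shows that every subcomplex $\langle \sigma \rangle$ generated by a non-degenerate simplex $\sigma = (x_{0},\dots,x_{k})$ of $S_{t}(X)$ is contractible --- via an explicit contracting homotopy built from the prepended words $(x_{0}, x_{\theta(0)},\dots,x_{\theta(m)})$, which are faces of $s_{0}(\sigma)$ --- and then invokes Lemma 4.2 of \cite{J34}, which says precisely that $\pi: \sd(Z) \to BNZ$ is a weak equivalence whenever all such generated subcomplexes are contractible. You instead decompose $S_{t}(X)$ into the codiscrete pieces $W_{C}$ indexed by cliques, prove local contractibility of both sides ($\sd W_{C}$ via the last-vertex map and the chaotic nerve; $BNW_{C}$ via directedness of $NW_{C}$ using concatenation of words), and glue with Reedy/homotopy-colimit machinery over the direct category of cliques. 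What the paper's route buys is brevity, at the cost of outsourcing the gluing problem to a nontrivial external lemma; what your route buys is self-containedness modulo standard model-category facts, and it isolates the same special feature of $S_{t}(X)$ (that it is built from word complexes, where one can prepend or concatenate) in a different local statement --- your concatenation upper bound plays the role of the paper's prepended-vertex cone. Two small points to tidy: the lemma assumes only an ep-metric space, so your indexing poset should be described as the poset $P_{t}(X)$ of finite subsets of diameter at most $t$ (the notation $NV_{t}(X)$ presupposes the total ordering, which is not available and not needed); and the several identifications of colimits with unions (for $S_{t}(X)$, for $NS_{t}(X)$, and for the latching objects) all rest on the family of subobjects being closed under pairwise intersection --- your ``least clique'' observation supplies this, but it deserves to be said once explicitly since it is used three times.
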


\begin{proof}
    We show that all subcomplexes $\langle \sigma \rangle$ which are generated by non-degenerate simplices $\sigma$ of $S_{t}(X)$ are contractible. Then
  Lemma 4.2 of \cite{J34} implies that the map $\pi$ is a weak equivalence.

  A non-degenerate simplex $\sigma$ has the form $\sigma = (x_{0},x_{1}, \dots ,x_{k})$ with $x_{i} \ne x_{i+1}$. The simplices $\tau$ of $\langle \sigma \rangle$ have the form
  \begin{equation*}
    \tau = \theta^{\ast}\sigma = (x_{\theta(0)}, \dots ,x_{\theta(k)}),
  \end{equation*}
  where $\theta: \mathbf{k} \to \mathbf{n}$ is an ordinal number morphism.

  For each such $\theta$, the list
  \begin{equation*}
    (x_{0},x_{\theta(0)}, \dots, x_{\theta(k)})
  \end{equation*}
  defines a simplex $\tau_{\ast}$ of $\langle \sigma \rangle$, since
  $\tau_{\ast}$ is a face of the simplex
  \begin{equation*}
    s_{0}(\sigma) = (x_{0},x_{0}, \dots ,x_{k}).
  \end{equation*}

  The simplices $\tau_{\ast}$ define functors
\begin{equation*}
  \xymatrix{
    x_{0} \ar[r] \ar[d] & x_{0} \ar[r] \ar[d] & \dots \ar[r] & x_{0} \ar[d] \\
    x_{\theta(0)} \ar[r] & x_{\theta(1)} \ar[r] & \dots \ar[r] & x_{\theta(m)}
  }
  \end{equation*}
or homotopies, that consist of simplices of $\langle \sigma \rangle$ that patch together to give a contracting homotopy $\langle \sigma \rangle \times \Delta^{1} \to \langle \sigma \rangle$.
  \end{proof}

\begin{theorem}\label{th 16}
  Suppose that $X$ is a totally ordered ep-metric space. Then there is a diagram of weak equivalences
  \begin{equation*}
    \xymatrix{
      BNV_{t}(X) \ar[d]_{\eta_{\ast}} & \sd(V_{t}(X)) \ar[l]_{\pi}^{\cong} \ar[r]^-{\gamma} \ar[d]^{\eta_{\ast}} & V_{t}(X) \ar[d]^{\eta} \\
      BNS_{t}(X) & \sd(S_{t}(X)) \ar[l]^{\pi} \ar[r]_-{\gamma} & S_{t}(X)
    }
    \end{equation*}
    In particular, the map $\eta: V_{t}(X) \to S_{t}(X)$ is a weak equivalence.

    This diagram is natural in $t$.
\end{theorem}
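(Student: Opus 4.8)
The plan is entirely formal: each map in the diagram is an instance of a natural transformation evaluated at the comparison $\eta \colon V_t(X) \to S_t(X)$, so both squares commute, and the statement then follows from Lemmas \ref{lem 14} and \ref{lem 15} together with two standard facts, by the two-out-of-three property of weak equivalences.

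First I would note that the middle vertical arrow is $\sd(\eta)$, the subdivision of $\eta$ (written $\eta_{\ast}$ in the diagram). Since the comparison $\pi \colon \sd(Z) \to BNZ$ is natural in the simplicial set $Z$, evaluating it at $\eta$ yields the left-hand square, whose left vertical arrow is $BN(\eta) = \eta_{\ast}$. Likewise the last-vertex map $\gamma \colon \sd(Z) \to Z$ is natural in $Z$, and evaluating it at $\eta$ yields the right-hand square. Thus both squares commute automatically.

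Next I would identify the arrows already known to be weak equivalences. The upper $\pi$ is in fact an isomorphism: the complex $V_t(X)$ is polyhedral, since $V_t(X) \subset V_{\infty}(X) = BX$ is a subobject of the nerve of the totally ordered poset $X$, so the cited bijectivity of $\pi$ on polyhedral simplicial sets applies. The left vertical map $\eta_{\ast} \colon BNV_t(X) \to BNS_t(X)$ is a weak equivalence by Lemma \ref{lem 14}, and the lower $\pi$ is a weak equivalence by Lemma \ref{lem 15}. Finally both copies of $\gamma$ are weak equivalences, by the classical fact that the last-vertex map $\sd(Z) \to Z$ is a weak equivalence for every simplicial set $Z$.

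The conclusion then follows by two applications of two-out-of-three. In the left square three of the four maps are weak equivalences (the upper $\pi$ an isomorphism, the left $\eta_{\ast}$ by Lemma \ref{lem 14}, the lower $\pi$ by Lemma \ref{lem 15}), which forces the middle arrow $\sd(\eta) \colon \sd(V_t(X)) \to \sd(S_t(X))$ to be a weak equivalence as well. Feeding this into the right square, where both horizontal $\gamma$'s are weak equivalences, forces $\eta \colon V_t(X) \to S_t(X)$ to be a weak equivalence. Naturality of the whole diagram in $t$ is inherited from the functoriality of $t \mapsto V_t(X)$ and $t \mapsto S_t(X)$ and the naturality of $\pi$, $\gamma$ and $\eta$ in the simplicial set variable. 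The substantive obstacles have already been overcome in Lemmas \ref{lem 14} and \ref{lem 15}; what remains is only the verification that the upper $\pi$ is an honest isomorphism---precisely the polyhedral property of the Vietoris-Rips complex---and the invocation of the standard last-vertex weak equivalence.
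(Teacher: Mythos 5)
Your proposal is correct and follows essentially the same route as the paper: the paper's proof likewise cites Lemma \ref{lem 14} for the left vertical map, Lemma \ref{lem 15} for the lower $\pi$, the polyhedral property of $V_{t}(X)$ for the upper $\pi$ being an isomorphism, and \cite{J34} for the maps $\gamma$, then concludes that $\sd(\eta)$ and $\eta$ are weak equivalences by two-out-of-three. Your write-up merely makes explicit the naturality arguments for the commutativity of the two squares, which the paper leaves implicit.
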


\begin{proof}
   The map $\eta_{\ast}: BNV_{t}(X) \to BNS_{t}(X)$ is a weak equivalence by Lemma \ref{lem 14}. The map $\pi: \sd(S_{t}(X)) \to BNS_{t}(X)$ is a weak equivalence by Lemma \ref{lem 15}.
   The instances of the maps $\gamma$ are weak equivalences \cite{J34}. It follows that the maps
   $\eta_{\ast}: \sd(V_{t}(X)) \to \sd(S_{t}(X))$ and $\eta: V_{t}(X) \to S_{t}(X)$ are weak equivalences.
\end{proof}  

\begin{remark}
  The total ordering on the ep-metric space $X$ in Theorem \ref{th 16} is intimately involved in the definition of the Vietoris-Rips system $V_{\ast}(X)$, the morphism
\begin{equation*}
  \eta: V_{t}(X) \to S_{t}(\Re(V_{\ast}(X))) = S_{t}(X),
\end{equation*}
and all induced maps $\eta_{\ast}$.

The counit $\eta: Z \to S(\Re(Z))$ is not a sectionwise weak equivalence in general. One can show that $S_{\infty}(\Re(Z))$ is the nerve of the trivial groupoid on the vertex set of $Z_{\infty}$ (Proposition \ref{prop 7}), and is therefore contractible, whereas the space $Z_{\infty}$ may not be contractible. For example, if $K$ is a simplicial set, then  there is an identification $K = (L_{s}K)_{\infty}$.

It may be that the map
\begin{equation*}
  \eta: BP_{t}(X) \to S_{t}(\Re(BP_{\ast}(X)))
\end{equation*}
is a weak equivalence for arbitrary ep-metric spaces $X$, but this has not been proved. Such a result would give a non-oriented version of Theorem \ref{th 16}.
  \end{remark}

The following is a classical result, which is included here for the sake of completeness. This result is usually neither expressed nor proved in the form displayed here.

\begin{lemma}\label{lem 18}
Suppose that $\sigma$ is an $n$-simplex of a simplicial set $X$. Then there is a unique iterated degeneracy and a non-degenerate simplex $x$ such that $\sigma = s(x)$.
\end{lemma}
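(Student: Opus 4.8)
The plan is to treat Lemma \ref{lem 18} as the classical Eilenberg--Zilber decomposition and to prove existence and uniqueness separately. Throughout I would lean on two standard facts about the ordinal number category $\mathbf{\Delta}$: every morphism factors uniquely as a surjection followed by an injection (the epi--mono factorization), and every surjection $\phi: \mathbf{n} \to \mathbf{p}$ admits an order-preserving section $\delta: \mathbf{p} \to \mathbf{n}$ with $\phi\delta = 1_{\mathbf{p}}$. In the simplicial set $X$ an iterated degeneracy is exactly the operator $\phi^{\ast}: X_{p} \to X_{n}$ induced by such a surjection $\phi$, and a composite of surjections is again a surjection; so an ``iterated degeneracy $s$'' is the same datum as a surjection in $\mathbf{\Delta}$, and I would phrase everything in those terms.

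\textbf{Existence.} I would argue by induction on $n$. If $\sigma$ is non-degenerate, take the identity degeneracy and $x = \sigma$. Otherwise $\sigma = s_{i}(\tau)$ for some codegeneracy operator $s_{i}$ and some $\tau \in X_{n-1}$; by the inductive hypothesis $\tau = s'(x)$ with $s'$ an iterated degeneracy and $x$ non-degenerate, whence $\sigma = s_{i}\, s'(x)$, and $s_{i} s'$ is again an iterated degeneracy. This gives the required factorization.

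\textbf{Uniqueness of the non-degenerate simplex.} Suppose $\sigma = s(x) = t(y)$, where $s = \phi^{\ast}$ and $t = \psi^{\ast}$ for surjections $\phi: \mathbf{n} \to \mathbf{p}$ and $\psi: \mathbf{n} \to \mathbf{q}$, with $x, y$ non-degenerate. Choosing a section $\delta$ of $\phi$ and applying $\delta^{\ast}$ recovers $x = \delta^{\ast}\sigma = (\psi\delta)^{\ast} y$. I would then epi--mono factor $\psi\delta = \mu\nu$ with $\nu$ a surjection and $\mu$ an injection; since $x = \nu^{\ast}(\mu^{\ast} y)$ is non-degenerate, the surjection $\nu$ must be an identity, so $\psi\delta = \mu$ is a monomorphism and $\dim x \le \dim y$. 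The symmetric argument (a section of $\psi$) gives the reverse inequality, so $\mathbf{p} = \mathbf{q}$, the monomorphism $\psi\delta: \mathbf{p} \to \mathbf{p}$ is forced to be the identity, and hence $x = (\psi\delta)^{\ast} y = y$.

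\textbf{Uniqueness of the degeneracy operator.} It remains to prove $\phi = \psi$, and this is where I expect the real work to lie. With $x = y$ non-degenerate of dimension $p$ and $\phi, \psi: \mathbf{n} \to \mathbf{p}$ surjections satisfying $\phi^{\ast} x = \psi^{\ast} x$, I would show that any discrepancy forces $x$ to be degenerate, contradicting the hypothesis. Concretely, if $\phi \ne \psi$ then, since both are order-preserving surjections and hence determined by the adjacent pairs they collapse, some pair $k, k+1$ is collapsed by one but not the other, say $\phi(k) = \phi(k+1)$ while $\psi(k) < \psi(k+1)$. The first relation gives $d_{k}\sigma = d_{k+1}\sigma$, and translating this through $\sigma = \psi^{\ast} x$ by the simplicial identities yields $(\psi\delta^{k})^{\ast} x = (\psi\delta^{k+1})^{\ast} x$ for the two cofaces $\delta^{k}, \delta^{k+1}: \mathbf{n-1} \to \mathbf{n}$. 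Because $\psi$ separates $k$ and $k+1$, the maps $\psi\delta^{k}$ and $\psi\delta^{k+1}$ are distinct, so I would descend by induction on $n$ to a contradiction with inductive uniqueness, the base case $n = p$ being immediate since the only surjection $\mathbf{p} \to \mathbf{p}$ is the identity. The main obstacle is bookkeeping in this descent: when a fibre of $\psi$ is a singleton, one of $\psi\delta^{k}, \psi\delta^{k+1}$ fails to be surjective and a coface of $x$ enters, so I would have to track the epi--mono factorizations of $\psi\delta^{k}$ and $\psi\delta^{k+1}$ carefully and play the resulting relation off against the non-degeneracy of $x$.
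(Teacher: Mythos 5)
Your existence argument and your uniqueness argument for the non-degenerate simplex are both correct, and the latter is essentially the paper's own mechanism: the paper also chooses a section of one surjection, pushes it through the other decomposition, and uses the epi--mono factorization in $\mathbf{\Delta}$ together with non-degeneracy to kill the surjective factor. (The paper, incidentally, proves only uniqueness; your existence induction is extra.) The gap is in your third part, uniqueness of the degeneracy operator, which you yourself flag as ``where the real work lies'' and then leave as a plan. As stated, the plan is not yet a proof, and the missing step is genuinely delicate: the bare configuration ``$\psi\delta^{k} \ne \psi\delta^{k+1}$ but $(\psi\delta^{k})^{\ast}x = (\psi\delta^{k+1})^{\ast}x$ with $x$ non-degenerate'' is \emph{not} contradictory by itself --- a non-degenerate loop ($d_{0}x = d_{1}x$, with $\psi = 1$) satisfies exactly such a relation --- so no descent can succeed using only that relation. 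The descent must also remember that $\tau = d_{k}\sigma = d_{k+1}\sigma$ carries the \emph{surjective} decomposition $\tau = (\phi\delta^{k})^{\ast}x$, where $\phi\delta^{k} = \phi\delta^{k+1}$ because $\phi(k) = \phi(k+1)$. The case analysis that closes your argument is then: if $\psi\delta^{k}$ (or $\psi\delta^{k+1}$) fails to be surjective, its epi--mono factorization expresses $\tau$ with a non-degenerate part of dimension strictly less than $p$, which contradicts the uniqueness of the non-degenerate part (already proved, and forcing that part to be $x$ itself); if both are surjective, inductive uniqueness of the surjection in dimension $n-1$ gives $\psi\delta^{k} = \phi\delta^{k} = \phi\delta^{k+1} = \psi\delta^{k+1}$, contradicting that these two maps differ at $k$. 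So your route can be completed, but the decisive step is absent from what you submitted.

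You should also compare with the paper's one-stroke version of this step, which needs no induction and no case analysis. If $\phi \ne \psi$ are distinct monotone surjections $\mathbf{n} \to \mathbf{p}$, then since the fibres of a monotone surjection are intervals, \emph{every} choice of one point from each fibre of $\phi$ is automatically a monotone section $\delta$ of $\phi$; choosing, in its fibre, a point $i$ where $\psi(i) \ne \phi(i)$ produces a section with $\psi\delta \ne 1$. Then $x = \delta^{\ast}\sigma = (\psi\delta)^{\ast}x$, and the same epi--mono-plus-non-degeneracy argument you already used in your second part shows that $\psi\delta$ is an injective monotone endomorphism of $\mathbf{p}$, hence the identity --- a contradiction. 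This is exactly the paper's ``face map $d$ such that $d \cdot s = 1$ but $d \cdot s' \ne 1$'': the entire difficulty your adjacent-pair descent works around is absorbed into choosing the section so that it separates $\phi$ from $\psi$.
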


An iterated degeneracy is a surjective ordinal number map $s: \mathbf{n} \to \mathbf{k}$.  Such a map induces a function $s: X_{k} \to X_{n}$ for a simplicial set $X$. Lemma \ref{lem 18} says that $\sigma = s(x)$ for some iterated degeneracy $s$ and a non-degenerate simplex $x$, and that this representation is unique.

\begin{proof}[Proof of Lemma \ref{lem 18}]
Suppose that $\sigma = s(x) = s'(x')$ where $s,s'$ are iterated degeneracies and $x,x'$ are non-degenerate.

    The $x = d(\sigma)$ for some face map $d$ such that $d \cdot s = 1$, and so $d(s'(x')) = s''(d''(x))$ for some iterated degeneracy $s''$ and face map $d''$. But $x$ is non-degenerate, so that $s''=1$ and $x = d''(x')$. Similarly, $x' = \tilde{d}(x)$ for some face map $\tilde{d}$. But then $x$ and $x'$ have the same dimension, and $d=1$, so that $x=x'$.

    If $s \ne s'$ there is a face map $d$ such that $d \cdot s=1$ but $d \cdot s' \ne 1$. Then $\sigma =s(x) = s'(x)$ for $s \ne s'$ and $x$ non-degenerate, then
    \begin{equation*}
      d(\sigma) = x = d(s'(x)) = s''(d''(x))
      \end{equation*}
    for some degeneracy $s''$ and face map $d''$, at least one of which is non-trivial.

    But $x$ is non-degenerate, so that $s''=1$ and $x=d''(x)$ only if $d'' =1$. This contradicts the assumption that $s \ne s'$. 
\end{proof}



\bibliographystyle{plain}
\bibliography{spt}

\end{document}